\newtheorem{theorem}{Theorem}
\newtheorem{proposition}{Proposition}
\newtheorem{corollary}[proposition]{Corollary}
\newtheorem{lemma}[proposition]{Lemma}
\theoremstyle{definition}
\newtheorem{definition}[proposition]{Definition}
\newtheorem{example}[proposition]{Example}
\newtheorem*{remark}{Remark}
\newcommand{\ZZ}{{\mathbb Z}}
\newcommand{\RR}{{\mathbb R}}
\newcommand{\QQ}{{\mathbb Q}}
\newcommand{\tog}{{S}}
\newcommand{\rev}{{R}}
\newcommand{\wtc}{{\mathsf{wtc}}}
\newcommand{\qqand}{{\qquad\text{and}\qquad}}
\newcommand{\qqiff}{{\qquad\text{if and only if}\qquad}}
\renewcommand{\AA}{{\mathcal A}}
\newcommand{\vect}[1]{{\mathbf{#1}}}
\begin{document}

\title{Orders on free groups induced by oriented words}
\author{Zoran \v{S}uni\'c}
\address{Dept. Of Mathematics, Texas A\&M University, College Station, TX 77843-3368, USA}
\email{sunic@math.tamu.edu}
\thanks{This material is based upon work supported by the National Science
Foundation under Grant No. DMS-1105520. }

\begin{abstract}
For every finite rank $k$, $k \geq 2$, we explicitly construct $(2k)!$ left orders on the free group $F_k$ of rank $k$. Each order is induced by a word of length $2k$ in which each generator of $F_k$ and its inverse appear exactly once. For each of these $(2k)!$ words we define a real valued function on $F_k$, which is shown to be a quasi-character with small relative defect and which is used as a weight function to define the corresponding order (the elements of $F_k$ which evaluate to positive real numbers are declared positive in the group). Some of the orders we define on $F_k$ are extensions of the usual lexicographic order on the positive monoid and some have word reversible positive cones. We characterize the defining words leading to orders of either of these two types.
\end{abstract}

\dedicatory{To Smile Markovski, my teacher and friend, wishing him long and enjoyable retirement}

\keywords{free group, ordered group, quasi-character}
\subjclass[2010]{06F15, 20F60, 20E05}

\maketitle

\section{Introduction}

For each finite rank $k$, $k \geq 2$, one particularly easy to describe left order on the free group $F_k$ was introduced in~\cite{sunic:free-lex}. When $k=2$ and $F_2=F(a,b)$, the order is defined as follows. The element represented by a reduced group word $g$ over $\{a,b\}$ is positive if the number of occurrences of $ba^{-1}$ in $g$ is strictly greater than the number of occurrences of $b^{-1}a$ in $g$, and it is negative if the number of occurrences if $ba^{-1}$ in $g$ is strictly smaller than the number of occurrences of $b^{-1}a$ in $g$. In case these two numbers are equal, the last letter of $g$ is the tie-breaker, namely, $g$ is positive if the last letter in $g$ is $a$ or $b$ and negative if the last letter is $a^{-1}$ or $b^{-1}$.

It was announced in~\cite{sunic:free-lex} that variations of the order described above can easily be introduced and one of the purposes of this work is to explicitly present those variations. Namely, we describe, for every $k \geq 2$, $(2k)!$ different left orders on $F_k$, induced by the $(2k)!$ oriented words of rank $k$ (these are the words of length $2k$ in which each generator of $F_k$ and its inverse appear exactly once; Section~\ref{s:background}, Definition~\ref{d:oriented}).

A way of constructing actions of the free group, free on at least one orbit, was provided in~\cite{sunic:free-pda} and, as an application, examples of actions of $F_k$, $k \geq 2$, on the circle, free on the orbit of 0, were also given there~\cite[Subsection 2.1]{sunic:free-pda}, along with a passing remark that these actions could be used to define orders on $F_k$ (by lifting them to actions on the line). The orders introduced here are precisely the orders that correspond to those actions. However, the approach we take here avoids the discussion of the actions altogether and defines, for each oriented word $u$, the associated order $\leq_u$ directly in terms of a certain weight function $\tau_u:F_k \to \RR$ (Section~\ref{s:orders}, Definition~\ref{d:weight}), which is, in turn, described solely in terms of the structure of the defining oriented word $u$. Positive elements under the order $\leq_u$ are precisely those whose weight under $\tau_u$ is positive (Section~\ref{s:orders}, Definition~\ref{d:order}). One of the reasons we decided to suppress any direct reference to the corresponding actions on the line is to provide a different perspective and concentrate on the weight functions, which are not only sufficient to define the orders in simple terms, but provide an excellent tool to study them. The point of view that emphasizes the actions rather than the weights is taken in a different work~\cite{sunic:from-bi-infinite}, where Cantor sets of orders on free groups are constructed, each induced by a bi-infinite word (in a sense, the orders defined here are ``strung together'' in various ways to build uncountably many orders).

A non-obvious but useful feature of the weight functions is that they are quasi-characters of the free group (Section~\ref{s:quasi}, Theorem~\ref{t:tau-is-quasi}). Moreover, the defect of the weight functions is relatively small (when compared to the values of the weight functions), and this property is used to show that they indeed define orders on free groups (Section~\ref{s:criterion}, Theorem~\ref{t:is-order}). The proof is based on a simple general condition under which a function may be used as a weight function defining an order on a group (Proposition~\ref{p:criterion}).

We characterize the oriented words for which the induced order on $F_k$ is the extension of the lexicographic order on the free monoid or rank $k$ (Section~\ref{s:lex}, Proposition~\ref{p:lexicographic}), and we also characterize the oriented words that induce reversible orders on $F_k$ (Section~\ref{s:rev}, Proposition~\ref{p:reversible}). Moreover, we establish an interesting relation between the orders $\leq_u$ and $\leq_{u^{-1}}$, namely, the positive elements under $\leq_{u^{-1}}$ are precisely the word reversals of the positive elements under $\leq_u$ (Section~\ref{s:rev}, Theorem~\ref{t:inverse-reverse}).

At the end, we show that, for $k \geq 2$, none of the $(2k)!$ orders defined on $F_k$ by the $(2k)!$ oriented words is a bi-order (Section~\ref{s:no-bi}, Proposition~\ref{p:no-bi}), and that they are all distinct (Section~\ref{s:different}, Theorem~\ref{t:different}).

\subsection*{Acknowledgment}

I would like to thank Mark Sapir and Slava Grigorchuk for sharing their thoughts on quasi-characters with me, and Warren Dicks for his continuing interest and encouragement.

%---------------------------------------------------------------------------------

\section{Background, general setting, and notation}\label{s:background}

\subsection{Ordered groups}

A \emph{left order} on a group $G$ is a linear order on $G$ that is compatible with multiplication on the left in $G$, i.e., for all $f,g,h \in G$,
\[
 g \leq h \implies fg \leq fh.
\]
A \emph{right order} is a linear order that is compatible with multiplication on the right, and a \emph{bi-order} is a linear order that is simultaneously left and right compatible. We are interested in left orders in this work and whenever we use the word order we mean a left order. A group is \emph{orderable} if it admits an order.

It is common to study and discuss, and often even define, orders through their corresponding positive cones. Given an order $\leq$ on $G$, the \emph{positive cone} of the order is the set
\[
 P = \{ \ g \mid e < g \ \}
\]
of elements that are strictly greater than the identity. The positive cone satisfies the following two properties:
\begin{itemize}
 \item[](closure) if $g,h \in P$ then $gh \in P$,
 \item[](trichotomy) $G$ decomposes into the disjoint union $G= P^{-1} \sqcup \{e\} \sqcup P$.
\end{itemize}
Conversely, any subset $P \subseteq G$ that satisfies the closure and trichotomy properties uniquely determines an order on $G$ for which $P$ is the positive cone, namely,
\[
 g < h \iff g^{-1}h \in P.
\]

It is well known that the free group of rank $k$, $k \geq 2$, is bi-orderable~\cite{shimbireva:ordered,iwasawa:ordered,neumann:ordered,vinogradov:ordered}. The bi-orders introduced in~\cite{shimbireva:ordered,iwasawa:ordered,neumann:ordered} are based on the lower central series of $F_k$ (each quotient in the series is torsion-free abelian, hence bi-orderable) and the ones in~\cite{vinogradov:ordered} on embeddings of $F_2$ into a certain ring of infinite matrices with coefficients in the group ring of $\ZZ^2$. It was realized later that the Magnus embedding~\cite{magnus:free-embedding} of $F_k$ in the (bi-ordered) ring $\ZZ\ll x_1,x_2,\dots,x_k\gg$ of power series with integer coefficients over $k$ non-commuting variables induces an order on $F_k$, and this is probably the most widely known order on $F_k$.

The orders induced by oriented words that we define here are considerably easier to describe in concrete terms than any of the bi-orders mentioned in the last paragraph. Indeed, as we will see, deciding if an element is positive amounts to a simple signed count of subwords of length 2. On the other hand, none of the orders induced by oriented words is a bi-order. It should be noted, however, that some of the interesting properties enjoyed by some of the orders induced by oriented words cannot be shared by bi-orders. For instance, no bi-order on $F_k$, $k \geq 2$, can be an extension of the lexicographic order on the free monoid, nor can a bi-order on $F_k$, $k \geq 2$, be reversible.

\subsection{General setting/notation}
The free group of rank $k$ is denoted by $F_k$. Its basis, unless specified otherwise, is $\AA_k=\{a_1,\dots,a_k\}$. The letters from the beginning of alphabet, such as $a,b,c,\dots$, usually denote elements from the basis $\AA_k$. To simplify notation, when $a$ ($b,c,\dots$) denotes an element in $\AA$, its inverse is denoted by the corresponding uppercase letter $A$ ($B,C,\dots$). For this reason, the letters in $\AA_k$ are sometimes called lowercase letters, and those in $\AA^{-1}_k$ uppercase letters. A group word over $\AA_k$ is a word over $\AA_k^\pm = \{a_1,\dots,a_k,A_1,\dots,A_k\}$. Such words are called group words of rank $k$. We sometimes omit the index $k$ in $\AA_k$.

By choice, the letters in $\AA_k$ are positive elements with respect to all orders defined here. Accordingly, the letters in $\AA_k^{-1}$ are always negative. This is compatible with the common way of referring to the elements in the basis $\AA_k$ as positive generators (letters). Technically speaking these two notions of positivity are independent, but we find it convenient to keep them in sync in this work (renaming the symbols is always an option). While many of our claims and results make sense and are correct when $k=1$, some are not, and we will avoid discussing this trivial case. Thus, $k$ is considered to be greater than 1, even if we do not mention this explicitly.

\begin{definition}\label{d:oriented}
An \emph{oriented word} over $\AA_k$ (an oriented word of rank $k$), $k \geq$, is a group word of length $2k$ that contains each of the letters in $\AA_k^\pm$ exactly once.
\end{definition}

For any two distinct letters $x$ and $y$ in $\AA_k^\pm$ and any oriented word $u$ of rank $k$, we write $x..y$ in $u$, if $x$ is to the left of $y$ in $u$.

%---------------------------------------------------------------------------------

\section{Weights and orders induced by oriented words}\label{s:orders}

For every oriented word of rank $k$, $k \geq 2$, we define an order $\leq_u$ on $F_k$. The order $\leq_u$ is defined in terms of a weight function $\tau_u: F_k \to \RR$. The weight function is based on the structure of the defining word $u$ and it has three components as defined below. Note that the third of these components, the last letter weight, does not depend on $u$.

\begin{definition}[Weight induced by $u$]\label{d:weight}
Let $u$ be an oriented word of rank $k$, $k \geq 2$.

Define the \emph{case transition weight} $\alpha_u: F_k \to \RR$ as follows. For a reduced group word $g$ of rank $k$,
\begin{equation}\label{e:alpha}
 \alpha_u(g) = \sum_{\substack{a,b \in \AA \\ A..B \textup{ in } u}} \#_{aB}(g) - \sum_{\substack{a,b \in \AA \\ a..b \textup{ in } u}} \#_{Ba}(g).
\end{equation}

Define the \emph{letter transition weight} $\beta_u: F_k \to \RR$ as follows. For a reduced group word $g$ of rank $k$,
\begin{equation}\label{e:beta}
 \beta_u(g) = \sum_{\substack{a,b \in \AA \\ A..b \textup{ in } u}} \#_{ab}(g) - \sum_{\substack{a,b \in \AA \\ A..b \textup{ in } u}} \#_{BA}(g).
\end{equation}

Define the \emph{last letter weight} $\omega: F_k \to \RR$ as follows. For a reduced group word $g$ of rank $k$,
\begin{equation}\label{e:omega}
 \omega(g) = \frac{1}{2}
  \begin{cases}
   1, & \text{if the last letter of } g \text{ is positive (i.e., it is in } \AA) \\
  -1, & \text{if the last letter of } g \text{ is negative (i.e., it is in } \AA^{-1})\\
   0, & \text{if } g \text{ is trivial}
  \end{cases}.
\end{equation}

Finally, define the \emph{weight} function (induced by the oriented word $u$) $\tau_u: F_k \to \RR$, by
\[
 \tau_u = \alpha_u + \beta_u + \omega.
\]
\end{definition}

\begin{remark}
We emphasize that, as written, the formulae~\eqref{e:alpha},~\eqref{e:beta} and~\eqref{e:omega} defining the case transition, the letter transition, and the last letter weights are not invariant under free reductions/insertions and they should only be applied to words in reduced form. For an arbitrary group word $g$ of rank $k$, $\tau_u(g)$ is equal to $\tau_u(g')$ where $g'$ is the reduced form of $g$ and~\eqref{e:alpha},~\eqref{e:beta} and~\eqref{e:omega} can and should be applied to $g'$.
\end{remark}

\begin{definition}[Positive cone and order induced by $u$]\label{d:order}
Given an oriented word $u$ of rank $k$, $k \geq 2$, let
\[
 P_u = \{\ g \in F_k \mid \tau_u(g)>0 \ \}.
\]
A left order $\leq_u$ on $F_k$ induced by the oriented word $u$ is defined by declaring $P_u$ to be the positive cone of the order.
\end{definition}

\begin{remark}
It is not immediately clear that $\leq_u$ is actually an order on $F_k$ and this will be proved in Section~\ref{s:criterion}, Theorem~\ref{t:is-order}.
\end{remark}

\begin{example}\label{e:abBA}
We recast the example from the introduction in the setting we just introduced. 

Let $F_2 = F(a,b)$ and $u=abBA$. The weight function on $F_2$ induced by $u$ is given by
\[
 \tau_u(g) = \#_{bA}(g) - \#_{Ba}(g) + \omega(g),
\]
for $g$ a reduced group word over $\{a,b\}$. For instance, $\tau_u(BAbA) = \#_{bA}(BAbA) - \#_{Ba}(BAbA) + \omega(BAbA) = 1-0 -1/2 = 1/2$.

The positive cone induced by $u$ is
\[
 P_u = \{ \ u \in F_2 \mid \tau_u(g)>0 \ \}.
\]
The fact that $\tau_u(BAbA) = 1/2>0$ indicates that $BAbA$ is a positive element and, in particular, $ab <_u bA$. From smallest to largest, the elements of length at most 2 are
\[
 aB,~BB,~BA,~B,~Ba,~AB,~AA,~A,~e,~a,~aa,~ab,~bA,~b,~ba,~bb,~Ab.
\]
\end{example}

We consider another way to look at the weight function, through weight contributions.

\begin{definition}[Weight contributions induced by $u$]
Let $u$ be an oriented word of rank $k$, $k \geq 2$. For every reduced 2-letter word $xy$ of rank $k$ define the \emph{weight contribution} $\wtc_u(xy)$ of $xy$ induced by $u$, as follows. If $x=a$ is a positive letter, then, for every positive or negative letter $y \in \AA_k^\pm$, $y \neq A$,
\[
 \wtc_u(ay) =
  \begin{cases}
    1, & \textup{if } A..y \textup{ in } u \\
    0, & \textup{if } y..A \textup{ in } u
  \end{cases}
\]
and, if $x=A$ is a negative letter, then, for every positive or negative letter $y \in \AA_k^\pm$, $y \neq a$,
\[
 \wtc_u(Ay) =
  \begin{cases}
    -1, & \textup{if } y..a \textup{ in } u \\
    0, &  \textup{if } a..y \textup{ in } u
  \end{cases}.
\]
\end{definition}

\begin{remark}
It is important to observe that the weight contribution of a reduced 2-letter word that starts in a positive letter is always nonnegative, 0 or 1, and the weight contribution of a reduced 2-letter word that starts in a negative letter is always nonpositive, 0 or -1. We use Table~\ref{tbl:wtc} as a lookup whenever we need to make quick calculations or comparisons.
\begin{table}[!ht]
\begin{alignat*}{7}
 &A..B \qquad &&\iff \qquad &&\wtc(aB)&&=1 \qquad &&\iff \qquad &&\wtc(bA)&&=0 \\
 &a..b \qquad &&\iff \qquad &&\wtc(Ab)&&=0 \qquad &&\iff \qquad &&\wtc(Ba)&&=-1 \\
 &A..b &&\iff &&\wtc(ab)&&=1 &&\iff &&\wtc(BA)&&=-1  \\
 &a..B &&\iff &&\wtc(AB)&&=0 &&\iff &&\wtc(ba)&&=0
\end{alignat*}
\caption{Weight contributions associated to relative positions of letters}
\label{tbl:wtc}
\end{table}
The top row in Table~\ref{tbl:wtc} indicates that, for any two (necessarily distinct) positive letters $a,b\in \AA$ the weight contribution $\wtc_u(bA)$ is 0 if and only if $A$ is to the left of $B$ in $u$. Similarly, the third row indicates that, for any two (not necessarily distinct) positive letters $a,b \in \AA$, $A$ is to the left of $b$ in $u$ if and only of the weight contribution of $BA$ is -1.
\end{remark}

\begin{proposition}
Let $u$ be an oriented word of rank $k$, $k \geq 2$. For a reduced group word $g=x_1x_2\dots x_n$ of rank $k$,
\[
 \tau_u(g) = \sum_{i=1}^{n-1} \wtc_u(x_ix_{i+1})  + \omega(g).
\]
\end{proposition}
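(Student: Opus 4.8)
The plan is to strip off the last-letter weight at once: since $\tau_u = \alpha_u + \beta_u + \omega$ and the summand $\omega(g)$ appears verbatim on the right-hand side of the claimed identity, it suffices to prove
\[
\alpha_u(g) + \beta_u(g) = \sum_{i=1}^{n-1} \wtc_u(x_ix_{i+1}).
\]
The right-hand side is a sum over the consecutive two-letter subwords of $g$, and the natural first move is to reorganize it by the \emph{value} of each subword rather than by position. Because $g = x_1\cdots x_n$ is reduced, every consecutive pair $x_ix_{i+1}$ is a reduced two-letter word, and $\#_{xy}(g)$ counts exactly the indices $i$ at which $x_ix_{i+1} = xy$; hence
\[
\sum_{i=1}^{n-1} \wtc_u(x_ix_{i+1}) = \sum_{xy} \#_{xy}(g)\,\wtc_u(xy),
\]
where $xy$ ranges over all reduced two-letter words of rank $k$.

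Next I would split this sum according to the four possible case patterns of $xy$ (lowercase--lowercase, lowercase--uppercase, uppercase--lowercase, uppercase--uppercase), writing the four families as $ab$, $aB$, $Ba$, $BA$ with $a,b \in \AA$, and noting which pairs are excluded by reducedness (precisely the $aA$-type and $Ba$-where-$a=b$ type pairs drop out, which matches the $y \neq A$ and $y \neq a$ side conditions in the definition of $\wtc_u$). For each family I would invoke the defining formula for $\wtc_u$, equivalently Table~\ref{tbl:wtc}: a lowercase-initial subword contributes $1$ precisely when the corresponding ``$A..\,$'' relation holds in $u$ and $0$ otherwise, while an uppercase-initial subword contributes $-1$ precisely when the corresponding ``$..a$'' relation holds and $0$ otherwise. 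Collapsing each family sum to its nonzero terms then yields $\sum_{A..B}\#_{aB}(g)$, $-\sum_{a..b}\#_{Ba}(g)$, $\sum_{A..b}\#_{ab}(g)$, and $-\sum_{A..b}\#_{BA}(g)$, respectively.

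Finally, I would recognize that the first two of these four partial sums are exactly the two terms of $\alpha_u(g)$ from~\eqref{e:alpha}, and the last two are exactly the two terms of $\beta_u(g)$ from~\eqref{e:beta}; adding all four recovers $\alpha_u(g)+\beta_u(g)$, which completes the argument. The only genuinely delicate point is the bookkeeping in the previous paragraph: one must check that the index sets match on the nose, and in particular that the reducedness constraints on $x_ix_{i+1}$ (forbidding $a=b$ in the $aB$ and $Ba$ families, but permitting it in $ab$ and $BA$, where it produces $aa$ and $AA$) coincide precisely with the ranges of summation in~\eqref{e:alpha} and~\eqref{e:beta} together with the side conditions on $\wtc_u$. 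Once this alignment is confirmed the identity is immediate, since both sides merely tally occurrences of two-letter subwords weighted by the same $\{-1,0,1\}$-valued coefficients dictated by the relative order of the letters in $u$.
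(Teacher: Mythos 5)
Your proof is correct and takes essentially the same approach as the paper: the paper's one-sentence proof simply observes that the top two rows of Table~\ref{tbl:wtc} correspond to the two sums in $\alpha_u$ and the bottom two rows to the two sums in $\beta_u$, which is precisely the bookkeeping you carry out explicitly. Your detailed verification of the index-set alignment (grouping positions by the value of the two-letter subword, and checking that the reducedness exclusions match the side conditions $y \neq A$ and $y \neq a$ in the definition of $\wtc_u$) is exactly the content the paper leaves implicit.
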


\begin{proof}
The top two rows in Table~\ref{tbl:wtc} correspond to the case transition part $\alpha_u$, and the bottom two to the letter transition part $\beta_u$ of the weight $\tau_u$ induced by $u$.
\end{proof}

%--------------------------------------------------------------------------

\section{The weight functions as quasi-characters}\label{s:quasi}

Recall that a (real) \emph{quasi-character} (terminology from~\cite{grigorchuk:bounded}) of a group $G$ is a function $\varphi: G \to \RR$ such that there exists a constant $D \in \RR$ such that, for all $g,h \in G$,
\[
 | \varphi(g)+\varphi(h)- \varphi(gh) | \leq D.
\]
The value $\delta(\varphi)=\sup_{g,h \in G} | \varphi(g)+\varphi(h)- \varphi(gh) |$ is called the \emph{defect} of $\varphi$. It seemingly measures how far from a (real) character $\varphi$ is, but this is not correct, as there are groups that admit quasi-characters that do not differ by a bounded function from any character (Johnson~\cite{johnson:cohomology-banach} observed this for the free group). In fact, from the bounded cohomology point of view, quasi-characters that differ from a character by a bounded function are considered trivial.

The purpose of this section is to prove the following result.

\begin{theorem}\label{t:tau-is-quasi}
For every oriented word $u$ of rank $k$, $k \geq2$, the weight $\tau_u : F_k \to \RR$ is a quasi-character of defect $1/2$.

Moreover,
\[
 | \tau_u(g)+\tau_u(h)- \tau_u(gh) | =
   \begin{cases}
    0,           &\textup{if } g=e     \textup{ or }  h=e      \textup{ or }  gh=e, \\
    \frac{1}{2}, &\textup{if } g\neq e,~h \neq e \textup{ and } gh \neq e.
   \end{cases}
\]
\end{theorem}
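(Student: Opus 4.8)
The plan is to compute the defect expression $\tau_u(g)+\tau_u(h)-\tau_u(gh)$ directly from the weight--contribution formula of the preceding proposition, tracking the cancellation that occurs when the reduced words for $g$ and $h$ are concatenated. The three degenerate cases are quickly dispatched: $\tau_u(e)=0$ settles $g=e$ and $h=e$, while $gh=e$ reduces to the oddness $\tau_u(g^{-1})=-\tau_u(g)$, which I will obtain as a special instance of the main computation. The engine of the whole argument is a single identity read straight off Table~\ref{tbl:wtc}: for every reduced two--letter word $st$,
\[
 \wtc_u(st)+\wtc_u\bigl((st)^{-1}\bigr)=\tfrac12\bigl(\operatorname{sgn}(s)-\operatorname{sgn}(t)\bigr),
\]
where $\operatorname{sgn}(x)=+1$ for $x\in\AA$ and $-1$ for $x\in\AA^{-1}$; equivalently, $\wtc_u(xy)=[\,\bar x\,..\,y\,]-\varepsilon(x)$, where $\bar x=x^{-1}$, $\varepsilon(x)\in\{0,1\}$ records whether $x$ is negative, and $[\,\bar x\,..\,y\,]$ is the indicator that $\bar x$ precedes $y$ in $u$. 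The crucial feature is that the right-hand side of the first identity is independent of $u$.

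Next I write the reduced words as $g=wc$ and $h=c^{-1}v$, where $c$ is the maximal cancelling factor, so that $gh=wv$ is reduced. Expanding all three weights through the proposition, the transitions internal to $w$ and to $v$ appear identically in $\tau_u(gh)$ and cancel. What survives are the transitions internal to $c$ (which occur in $\tau_u(g)$ through $c$ and in $\tau_u(h)$ through $c^{-1}$), three junction contributions at the seams $w|c$, $c^{-1}|v$, $w|v$, and the last-letter difference $\omega(g)+\omega(h)-\omega(gh)$. Writing $c^{-1}=y_1\cdots y_p$ and pairing each internal transition of $c$ with its reverse inside $c^{-1}$, the displayed identity makes these telescope to $\tfrac12\bigl(\operatorname{sgn}(y_1)-\operatorname{sgn}(y_p)\bigr)$; since the last letter of $gh$ equals that of $h$, only $\omega(g)$ survives among the last-letter terms, it absorbs the $\operatorname{sgn}(y_1)$ boundary term, and the leftover sign and $\varepsilon$ constants collapse to $-\tfrac12$. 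The outcome is the clean formula
\[
 \tau_u(g)+\tau_u(h)-\tau_u(gh)=[\,\bar s\,..\,\bar t\,]+[\,\bar t\,..\,r\,]-[\,\bar s\,..\,r\,]-\tfrac12,
\]
where $s$ is the last letter of $w$, $t=y_p$ is the last letter of $c^{-1}$, and $r$ is the first letter of $v$.

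The argument then closes with the three-letter position check. Because $g$ and $h$ are reduced and the cancellation $c$ is maximal, the letters $\bar s,\bar t,r$ are pairwise distinct and hence occupy three distinct positions in $u$; for any three distinct letters the bracket $[\,\bar s\,..\,\bar t\,]+[\,\bar t\,..\,r\,]-[\,\bar s\,..\,r\,]$ takes value $0$ or $1$, a one-line verification over the six orderings of their positions. Thus the defect expression equals $\pm\tfrac12$. It remains to treat the boundary cases in which $c$ is empty (no cancellation) or exhausts $g$ or $h$ (total cancellation with exactly one of $w,v$ empty): the same expansion then leaves a single position comparison in $\{0,1\}$ minus $\tfrac12$, again yielding $\pm\tfrac12$, and the remaining instance $w=v=\varnothing$ of the telescoping produces $0$, which is precisely the oddness needed for $gh=e$.

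I expect the principal obstacle to be the bookkeeping of the three junction terms together with the $\omega$-difference across the partial, total, and no-cancellation cases --- keeping the signs, the $\varepsilon$-constants, and the telescoped boundary terms aligned so that everything collapses to the single comparison above --- rather than any conceptual difficulty. The two clean ideas, the reversal identity that telescopes the cancelled portion and the three-letter order check that pins the remainder to $\{0,1\}$, are what make the sharp value $\tfrac12$ fall out.
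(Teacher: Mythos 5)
Your proposal is correct; every step checks out (the reversal identity $\wtc_u(st)+\wtc_u((st)^{-1})=\tfrac12(\operatorname{sgn}s-\operatorname{sgn}t)$, the uniform formula $\wtc_u(xy)=[\,x^{-1}..y\,]-\varepsilon(x)$, the telescoping over the cancelled part, the pairwise distinctness of $\bar s,\bar t,r$ forced by reducedness and maximality of the cancellation, the six-ordering check, and the degenerate cases). It reaches the theorem by the same underlying decomposition as the paper but with a genuinely different lemma structure. The paper writes $g=g_1h_1^{-1}$, $h=h_1f_1^{-1}$, $gh=g_1f_1^{-1}$ (your $w,c^{-1},v$) and then \emph{symmetrizes}: it first proves antisymmetry $\tau_u(g)+\tau_u(g^{-1})=0$ as a standalone lemma (by reindexing the defining sums, plus a counting lemma on case transitions), and then reduces the defect to the cyclic sum $\tau_u(g_1h_1^{-1})+\tau_u(h_1f_1^{-1})+\tau_u(f_1g_1^{-1})$, where each of the three terms is shown to equal $\pm\tfrac12$ by a single junction computation ($\tau_u(fg^{-1})-\tau_u(f)-\tau_u(g^{-1})=\wtc_u(xy^{-1})-\omega(x)$), the sign being determined by whether $x^{-1}..y^{-1}$ or $y^{-1}..x^{-1}$ in $u$; the sum can fail to be $\pm\tfrac12$ only if all three signs agree, which would force an impossible cyclic order $x^{-1}..y^{-1}..z^{-1}..x^{-1}$ in $u$. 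Your route dispenses with both lemmas: the local reversal identity makes the cancelled portion telescope on the spot (antisymmetry falls out as the $w=v=\varnothing$ case rather than being proved separately), and your bracket check $[\,\bar s..\bar t\,]+[\,\bar t..r\,]-[\,\bar s..r\,]\in\{0,1\}$ is precisely the paper's no-cyclic-order observation in indicator form. What the paper's symmetrized formulation buys is modularity and minimal bookkeeping --- the three seams are treated by one uniform statement, and the antisymmetry lemma is reused elsewhere (e.g.\ in Section~\ref{s:rev}); what your direct expansion buys is a self-contained computation ending in an explicit closed formula for the defect, from which both the value $\tfrac12$ and its sign are immediately visible, at the cost of the case analysis over empty $w$, $c$, $v$ that you correctly anticipate as the main bookkeeping burden.
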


\begin{remark}
Recall that, for every reduced group word $w$ of rank $k$, Brooks~\cite{brooks:bounded} defined a quasi-character $\phi_w:F_k\to\RR$ by
\[
 \phi_w(g) = \#_{w}(g) - \#_{w^{-1}}(g),
\]
for $g$ a reduced group word of rank $k$. The letter transition weight is a finite sum of Brooks quasi-characters
\[
 \beta_u = \sum_{\substack{a,b \in \AA \\ A..b \textup{ in } u}} \phi_{ab}.
\]
The letter transition weight $\beta_u$ vanishes precisely when all positive letters precede all negative letters in the oriented word $u$ (as in Example~\ref{e:abBA}). The number of Brooks quasi-characters comprising $\beta_u$ varies between 0 and $k^2$, depending on the relative placement of the lowercase and the uppercase letters in $u$.

On the other hand, the case transition weight function $\alpha_u$ does not vanish, regardless of $u$. The expression for the case transition function $\alpha_u$ always has exactly $\binom{k}{2}$ plus terms, related to the relative placement of the uppercase letters, and $\binom{k}{2}$ minus terms, related to the relative placement of the lowercase letters in $u$.

For instance, when $k=2$, the case transition weight $\alpha_u$ always has exactly one plus term, exactly one minus term, and the number of Brooks summands in the letter transition weight $\beta_u$ is anywhere between 0 and 4. There are exactly 4 oriented words for which the letter transition function vanishes and they are the word $abBA$ from Example~\ref{e:abBA}, the word $abAB$, for which the weight is given by
\[
 \tau_{abAB}(g) = \#_{aB}(g) - \#_{Ba}(g) + \omega(g),
\]
for $g$ a reduced word of rank $2$, and the two words $baAB$ and $baBA$ obtained from the previous two by reversing the roles of $a$ and $b$.
\end{remark}

The rest of the section is devoted to a proof of Theorem~\ref{t:tau-is-quasi}.

A nontrivial reduced word of rank $k$ is of the form $\underline{\hspace{3mm}}m$ if it ends in a lowercase letter, it is of the form $m\underline{\hspace{3mm}}M$ if it starts in a lowercase letter and ends in an uppercase letter, and so on.

\begin{lemma}\label{l:total-transition-count}
Let $g$ be a nontrivial reduced group word of rank $k$, $k \geq2$. Then
\[
 \sum_{\substack{a,b \in \AA \\ a \neq b}} \#_{aB}(g) - \sum_{\substack{a,b \in \AA \\ a \neq b}} \#_{Ab}(g) =
 \left\{
  \begin{array}{rr}
   0, & \textup{if } g \textup{ is of the form } m\underline{\hspace{3mm}}m \textup{ or } M\underline{\hspace{3mm}}M, \\
   1, & \textup{if } g \textup{ is of the form } m\underline{\hspace{3mm}}M, \\
  -1, & \textup{if } g \textup{ is of the form } M\underline{\hspace{3mm}}m.
  \end{array}
 \right.
\]
\end{lemma}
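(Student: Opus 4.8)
The plan is to recognize the two sums as counts of \emph{case transitions} in $g$, and then to evaluate their difference by a telescoping argument that collapses everything to the first and last letters of $g$.

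First I would argue that, because $g$ is reduced, the two sums count \emph{all} lowercase-to-uppercase and \emph{all} uppercase-to-lowercase adjacencies, respectively. Write $g=x_1x_2\cdots x_n$. If $x_i$ is a lowercase letter $a$ and $x_{i+1}$ is uppercase, then $x_{i+1}=B$ for some $b \in \AA$, and reducedness forbids $x_{i+1}=A$, so automatically $a \neq b$; hence the first sum $\sum_{a \neq b}\#_{aB}(g)$ equals the number of indices $i$ with $x_i$ lowercase and $x_{i+1}$ uppercase. Symmetrically, the second sum $\sum_{a \neq b}\#_{Ab}(g)$ equals the number of indices $i$ with $x_i$ uppercase and $x_{i+1}$ lowercase.

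Next I would introduce the case indicator $v(x)=0$ if $x$ is lowercase and $v(x)=1$ if $x$ is uppercase. For each $i$, the increment $v(x_{i+1})-v(x_i)$ equals $+1$ exactly at a lowercase-to-uppercase adjacency, $-1$ exactly at an uppercase-to-lowercase adjacency, and $0$ otherwise. Summing over $i=1,\dots,n-1$ makes the total telescope, so the difference in the statement equals $v(x_n)-v(x_1)$.

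Finally I would read off the four cases from the first and last letters. If $g$ is of the form $m\underline{\hspace{3mm}}m$ then $v(x_1)=v(x_n)=0$; if $g$ is of the form $M\underline{\hspace{3mm}}M$ then $v(x_1)=v(x_n)=1$; in both cases the difference is $0$. If $g$ is of the form $m\underline{\hspace{3mm}}M$ the difference is $1-0=1$, and if $g$ is of the form $M\underline{\hspace{3mm}}m$ it is $0-1=-1$, matching the three claimed values. The only point requiring any care is the reducedness observation in the first step, which upgrades the restricted sums (with the constraint $a \neq b$) to honest counts of all case transitions; once that is in place the remainder is a one-line telescoping and there is no real obstacle.
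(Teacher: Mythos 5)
Your proof is correct and takes essentially the same approach as the paper: both identify the two sums (using reducedness to drop the constraint $a \neq b$) as counts of all lowercase-to-uppercase and all uppercase-to-lowercase case transitions, and both conclude that their difference is determined solely by the cases of the first and last letters of $g$. Your telescoping sum with the indicator $v$ is simply a more explicit rendering of the paper's observation that the two kinds of transitions must alternate along the word.
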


\begin{proof}
The quantity calculated by the expression on the left hand side is the number of case transitions from a lowercase letter to an uppercase letter minus the number of case transitions
from an uppercase letter to a lowercase letter. Since such transitions must alternate, this difference can only be 0,1, or -1, as indicated.
\end{proof}

\begin{lemma}\label{l:gginv}
Let $u$ be an oriented word of rank $k$, $k \geq 2$. For every reduced word $g$ of rank $k$,

(a)
\[
 \alpha_u(g) + \omega(g) + \alpha_u(g^{-1}) + \omega(g^{-1}) = 0.
\]

(b)
\[
 \beta_u(g) + \beta_u(g^{-1}) = 0.
\]

(c)
\[
 \tau_u(g) + \tau_u(g^{-1}) = 0.
\]
\end{lemma}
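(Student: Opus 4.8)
The plan is to get (c) for free from (a) and (b): since $\tau_u=\alpha_u+\beta_u+\omega$, we have
\[
 \tau_u(g)+\tau_u(g^{-1}) = \bigl(\alpha_u(g)+\omega(g)+\alpha_u(g^{-1})+\omega(g^{-1})\bigr) + \bigl(\beta_u(g)+\beta_u(g^{-1})\bigr),
\]
and the two parenthesized groups vanish by (a) and (b) respectively. So all the work is in (a) and (b), and the common engine for both is the fact that inversion of a reduced word reverses it letter by letter: if $g=x_1\cdots x_n$ then $g^{-1}=x_n^{-1}\cdots x_1^{-1}$, so the consecutive $2$-letter factors of $g^{-1}$ are exactly the words $x_{i+1}^{-1}x_i^{-1}$ for $i=1,\dots,n-1$, in bijection with the factors $x_ix_{i+1}$ of $g$. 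I would therefore evaluate $\alpha_u(g)+\alpha_u(g^{-1})$ and $\beta_u(g)+\beta_u(g^{-1})$ position by position, comparing the contribution $\wtc_u(x_ix_{i+1})$ against the contribution $\wtc_u(x_{i+1}^{-1}x_i^{-1})$ of its image, reading the relevant values off Table~\ref{tbl:wtc}.

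For (b) the cleanest route is the observation (recorded in the remark following Theorem~\ref{t:tau-is-quasi}) that $\beta_u$ is a sum of Brooks quasi-characters, $\beta_u=\sum_{A..b\textup{ in }u}\phi_{ab}$, together with the standard fact that $\phi_w(g^{-1})=-\phi_w(g)$, which holds because occurrences of $w$ in $g^{-1}$ biject with occurrences of $w^{-1}$ in $g$; this gives $\beta_u(g^{-1})=-\beta_u(g)$ termwise. Equivalently, and in the position-by-position spirit above, $\beta_u$ records only the same-case factors, and the bottom two rows of Table~\ref{tbl:wtc} give $\wtc_u(ab)+\wtc_u(BA)=0$ and $\wtc_u(AB)+\wtc_u(ba)=0$ for every same-case factor and its image under inversion (each identity holding regardless of the relative order of the letters in $u$, since exactly one order occurs). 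Every position contributes $0$, hence $\beta_u(g)+\beta_u(g^{-1})=0$.

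Part (a) is the real content, precisely because the case-transition factors do \emph{not} cancel position by position. Here $\alpha_u$ records only the factors that switch case, and the top two rows of Table~\ref{tbl:wtc} give $\wtc_u(aB)+\wtc_u(bA)=1$ for a lowercase-to-uppercase factor and $\wtc_u(Ab)+\wtc_u(Ba)=-1$ for an uppercase-to-lowercase factor. Summing over all positions, $\alpha_u(g)+\alpha_u(g^{-1})$ equals the number of lowercase-to-uppercase transitions in $g$ minus the number of uppercase-to-lowercase transitions, which is exactly the quantity evaluated in Lemma~\ref{l:total-transition-count}: it is $0$ when $g$ has the form $m\underline{\hspace{3mm}}m$ or $M\underline{\hspace{3mm}}M$, it is $1$ when $g$ has the form $m\underline{\hspace{3mm}}M$, and it is $-1$ when $g$ has the form $M\underline{\hspace{3mm}}m$.

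It remains to see that the last-letter weights absorb exactly this discrepancy, and this balancing is the step I expect to be the main obstacle, since it is where the signs must be tracked carefully. I would finish by tabulating $\omega(g)+\omega(g^{-1})$ against the four forms of $g$: because $g^{-1}$ ends in the inverse of the first letter of $g$, this sum depends on the cases of both the first and last letters of $g$, and it equals $0$ when $g$ is of the form $m\underline{\hspace{3mm}}m$ or $M\underline{\hspace{3mm}}M$, it equals $-1$ when $g$ is of the form $m\underline{\hspace{3mm}}M$, and it equals $1$ when $g$ is of the form $M\underline{\hspace{3mm}}m$. In each of the four cases this is precisely the negative of the value of $\alpha_u(g)+\alpha_u(g^{-1})$ computed above, so the sum of the four terms is $0$; the trivial case $g=e$ makes every term vanish. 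The only delicate point is matching signs across these four forms, where an off-by-one in the alternation argument underlying Lemma~\ref{l:total-transition-count} or a sign slip in $\omega$ would destroy the cancellation.
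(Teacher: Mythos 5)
Your proposal is correct and follows essentially the same route as the paper: (c) is deduced from (a) and (b), (b) is handled via the Brooks quasi-character decomposition of $\beta_u$, and (a) is proved by reducing $\alpha_u(g)+\alpha_u(g^{-1})$ to the signed case-transition count of Lemma~\ref{l:total-transition-count} and checking that $\omega(g)+\omega(g^{-1})$ cancels it in each of the four forms $m\underline{\hspace{3mm}}m$, $M\underline{\hspace{3mm}}M$, $m\underline{\hspace{3mm}}M$, $M\underline{\hspace{3mm}}m$. The only difference is bookkeeping --- you pair the $2$-letter factors of $g$ with those of $g^{-1}$ position by position using Table~\ref{tbl:wtc}, whereas the paper reindexes the defining sums of $\alpha_u$ globally --- and the mathematical content is identical.
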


\begin{proof}
(a) The quantity $\alpha_u(g^{-1}) + \omega(g^{-1})$ is equal to
\[
 \sum_{\substack{a,b \in \AA \\ A..B \textup{ in }w}} \#_{aB}(g^{-1}) - \sum_{\substack{a,b \in \AA \\ a..b \textup{ in }w}} \#_{Ba}(g^{-1})
                 + \left\{
                 \begin{array}{rr}
                 1/2, & \textup{if } g^{-1} \textup{ is of the form } \underline{\hspace{3mm}}m, \\
                 -1/2, & \textup{if } g^{-1} \textup{ is of the form } \underline{\hspace{3mm}}M,
                 \end{array}
                 \right.
\]
which is in turn equal to
\[
    \sum_{\substack{a,b \in \AA \\ A..B \textup{ in }w}} \#_{bA}(g) - \sum_{\substack{a,b \in \AA \\ a..b \textup{ in }w}} \#_{Ab}(g)
                 + \left\{
                 \begin{array}{rr}
                 1/2, & \textup{if } g \textup{ is of the form } M\underline{\hspace{3mm}}, \\
                 -1/2, & \textup{if } g \textup{ is of the form } m\underline{\hspace{3mm}}.
                 \end{array}
                 \right.,
\]
and, after reindexing (exchanging the roles of $a$ and $b$), to
\[
    \sum_{\substack{a,b \in \AA \\ B..A \textup{ in }w}} \#_{aB}(g) - \sum_{\substack{a,b \in \AA \\ b..a \textup{ in }w}} \#_{Ba}(g)
                 + \left\{
                 \begin{array}{rr}
                 1/2, & \textup{if } g \textup{ is of the form } M\underline{\hspace{3mm}}, \\
                 -1/2, & \textup{if } g \textup{ is of the form } m\underline{\hspace{3mm}}.
                 \end{array}
                 \right..
\]
Adding the expression we just obtained to $\alpha_u(g) + \omega(g)$ as given in Definition~\ref{d:weight}, and using Lemma~\ref{l:total-transition-count}, yields the result.

(b) Recall that $\beta_u$ is a sum of Brooks quasi-characters, and $\phi_w(g) + \phi_w(g^{-1})=0$, for any Brooks quasi-character $\phi_w$.

(c) Follows directly from (a) and (b).
\end{proof}

\begin{remark}
Note that Lemma~\ref{l:gginv}(c) says that, for every oriented word $u$, the weight $\tau_u$ is symmetric, i.e., weights assigned to $g$ and $g^{-1}$ are equal in absolute value and opposite in sign. We sometimes use this fact in the rest of the text without reference.
\end{remark}

For a nontrivial reduced word $f$ or rank $k$ and a (positive or negative) letter $x \in \AA^\pm$, we write $f=*x$ to indicate that $x$ is the last letter of $f$.

\begin{lemma}\label{l:defect}
Let $u$ be an oriented word of rank $k$, $k \geq 2$. 

(a) If $x$ and $y$ are two distinct letters in $\AA^\pm$, and $f=*x$ and $g=*y$ are two nontrivial reduced words of rank $k$, then
\[
 \tau_u(fg^{-1}) - \tau_u(f) - \tau_u(g^{-1}) = \wtc_u(xy^{-1}) - \omega(x) =
  \left\{
  \begin{array}{rr}
    1/2,  & x^{-1}..y^{-1} \textup{ in } u, \\
   -1/2,  & y^{-1}..x^{-1} \textup{ in } u.
  \end{array}
  \right.
\]

(b) If $fg^{-1}$, $gh^{-1}$, and $hf^{-1}$ are three nontrivial reduced words of rank $k$, then
\[
 |\tau_u(fg^{-1}) + \tau_u(gh^{-1}) + \tau_u(hf^{-1})| = \frac{1}{2}.
\]
\end{lemma}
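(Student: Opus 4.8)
The plan is to establish (a) first, exploiting the fact that the relevant product is already reduced, and then to derive (b) from (a) together with Lemma~\ref{l:gginv}(c) and one short combinatorial observation about linear orders.

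For part (a), the first step is to note that since $f=*x$ and $g=*y$ with $x \neq y$, the word $g^{-1}$ begins in $y^{-1}$, so the junction letter pair at the concatenation $f\cdot g^{-1}$ is $xy^{-1}$, which is reduced precisely because $x\neq y$. Hence $fg^{-1}$ is a reduced, nontrivial word obtained by literal concatenation, with no cancellation anywhere. I would then apply the weight-contribution proposition expressing $\tau_u$ as a sum of contributions over consecutive letter pairs: the pairs internal to $f$ reproduce $\tau_u(f)-\omega(f)$, the pairs internal to $g^{-1}$ reproduce $\tau_u(g^{-1})-\omega(g^{-1})$, the single new junction pair contributes $\wtc_u(xy^{-1})$, and the last letter weight of $fg^{-1}$ equals $\omega(g^{-1})$. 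Adding these, the boundary terms telescope and give $\tau_u(fg^{-1})-\tau_u(f)-\tau_u(g^{-1})=\wtc_u(xy^{-1})-\omega(x)$, where $\omega(x)$ is $\pm 1/2$ according to the sign of the last letter $x$ of $f$. The remaining equality is a four-case check ($x,y$ each positive or negative), which amounts to reading off the four rows of Table~\ref{tbl:wtc}: in every case $\wtc_u(xy^{-1})-\omega(x)$ evaluates to $+1/2$ when $x^{-1}$ precedes $y^{-1}$ in $u$ and to $-1/2$ otherwise. I expect no difficulty here beyond bookkeeping, since the contribution of a pair and the relative order of the corresponding inverse letters are linked exactly by the table.

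For part (b), introduce the shorthand $\sigma(p,q)=+1$ if $p..q$ in $u$ and $\sigma(p,q)=-1$ if $q..p$ in $u$, defined for distinct letters $p,q$ (both occur in $u$ since $u$ is oriented). Writing $x,y,z$ for the last letters of $f,g,h$, the reducedness of $fg^{-1}$, $gh^{-1}$, $hf^{-1}$ forces $x,y,z$ to be pairwise distinct, hence $x^{-1},y^{-1},z^{-1}$ are pairwise distinct as well, and (a) reads $\tau_u(fg^{-1})=\tau_u(f)+\tau_u(g^{-1})+\tfrac12\sigma(x^{-1},y^{-1})$, with the two analogues for $gh^{-1}$ and $hf^{-1}$. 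Summing the three, the quantities $\tau_u(f)+\tau_u(f^{-1})$, $\tau_u(g)+\tau_u(g^{-1})$, and $\tau_u(h)+\tau_u(h^{-1})$ each vanish by Lemma~\ref{l:gginv}(c), leaving $\tau_u(fg^{-1})+\tau_u(gh^{-1})+\tau_u(hf^{-1})=\tfrac12\bigl[\sigma(x^{-1},y^{-1})+\sigma(y^{-1},z^{-1})+\sigma(z^{-1},x^{-1})\bigr]$.

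The main obstacle, and the one genuinely non-mechanical step, is to show that this bracketed sum of three $\pm 1$ terms always has absolute value exactly $1$. It is odd (a sum of three odd integers), so it lies in $\{\pm 1,\pm 3\}$; it could equal $\pm 3$ only if all three comparisons agreed, that is $x^{-1}..y^{-1}..z^{-1}..x^{-1}$ or its reverse, which is impossible because $..$ is a linear (hence acyclic) order on the letters of $u$. Therefore the sum equals $\pm 1$, and multiplying by $\tfrac12$ yields $|\tau_u(fg^{-1})+\tau_u(gh^{-1})+\tau_u(hf^{-1})|=\tfrac12$. This ``no directed $3$-cycle in a linear order'' observation is the crux of (b); everything else reduces to part (a) and the symmetry relation of Lemma~\ref{l:gginv}(c).
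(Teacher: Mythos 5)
Your part (a) is correct and is essentially the paper's own argument: since $x\neq y$ the concatenation $f\cdot g^{-1}$ is reduced as written, the weight contributions telescope to give $\tau_u(fg^{-1}) = \tau_u(f) - \omega(x) + \wtc_u(xy^{-1}) + \tau_u(g^{-1})$, and the dichotomy $\pm 1/2$ is read off from Table~\ref{tbl:wtc} (the paper organizes the same four-case check by whether $x$ is lowercase or uppercase). Your part (b), in the case where $f$, $g$, $h$ are all nontrivial, is also the paper's argument: split the sum into the three defect terms using Lemma~\ref{l:gginv}(c), apply (a) to each, and rule out total $\pm 3/2$ because a directed $3$-cycle $x^{-1}..y^{-1}..z^{-1}..x^{-1}$ is impossible in the linear order of letters in $u$; your parity bookkeeping with $\sigma$ is just a more explicit phrasing of the paper's ``all three summands would have to have the same sign.''

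There is, however, a genuine gap in (b): when you write ``writing $x,y,z$ for the last letters of $f,g,h$,'' you are implicitly assuming that $f$, $g$, $h$ are all nontrivial. The hypothesis of (b) only demands that the three \emph{products} $fg^{-1}$, $gh^{-1}$, $hf^{-1}$ be nontrivial reduced words; it permits exactly one of $f$, $g$, $h$ to be the empty word (e.g.\ $h=e$ with $f,g$ nontrivial and ending in distinct letters). This is not a pedantic corner case: it is exactly the case that arises in the intended application, the proof of Theorem~\ref{t:tau-is-quasi}, where the tripod decomposition $g=g_1h_1^{-1}$, $h=h_1f_1^{-1}$, $gh=g_1f_1^{-1}$ has $h_1=e$ whenever no cancellation occurs in the product $gh$ (and $g_1=e$ or $f_1=e$ in other configurations). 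Your three-fold application of (a) breaks down there, since the trivial word has no last letter. The paper opens its proof of (b) by disposing of precisely this case: if, say, $h=e$, then by Lemma~\ref{l:gginv}(c),
\[
 \tau_u(fg^{-1}) + \tau_u(gh^{-1}) + \tau_u(hf^{-1}) = \tau_u(fg^{-1}) + \tau_u(g) + \tau_u(f^{-1}) = \tau_u(fg^{-1}) - \tau_u(g^{-1}) - \tau_u(f),
\]
which has absolute value $1/2$ by part (a). With this case added, your proof is complete and coincides with the paper's.
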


\begin{proof}
(a) Note that $g^{-1} = y^{-1}*$. Therefore,
\[
 \tau_u(fg^{-1}) = \tau_u(f) - \omega(x) + \wtc_u(xy^{-1}) + \tau_u(g^{-1}),
\]
which implies that $\tau_u(fg^{-1}) - \tau_u(f) - \tau_u(g^{-1}) = \wtc_u(xy^{-1}) - \omega(x)$. If $x^{-1}..y^{-1}$ in $u$ and $x$ is a lowercase letter, then $\wtc_u(xy^{-1})=1$, and $\wtc_u(xy^{-1}) - \omega(x)=1/2$, and if $x$ is an uppercase letter, then $\wtc_u(xy^{-1})=0$, and $\wtc_u(xy^{-1}) - \omega(x)=1/2$. If $y^{-1}..x^{-1}$ in $u$ and $x$ is a lowercase letter, then $\wtc_u(xy^{-1})=0$, and $\wtc_u(xy^{-1}) - \omega(x)=-1/2$, and if $x$ is an uppercase letter, then $\wtc_u(xy^{-1})=-1$, and $\wtc_u(xy^{-1}) - \omega(x)=-1/2$.

(b) If any of the words $f$, $g$, or $h$ is trivial, say $h$, then the other two must be nontrivial and
\[
 |\tau_u(fg^{-1}) + \tau_u(gh^{-1}) + \tau_u(hf^{-1})| = |\tau_u(fg^{-1}) - \tau_u(g^{-1}) - \tau_u(f)| = \frac{1}{2},
\]
where the last equality follows from part (a). If none of the words $f$, $g$, and $h$ is trivial, then there exist three distinct letters $x,y,z\in\AA^\pm$ such that $f=*x$, $g=*y$, and $h=*z$. Since $\tau_u(fg^{-1}) + \tau_u(gh^{-1}) + \tau_u(hf^{-1})$ is equal to
\[
 [\tau_u(fg^{-1}) - \tau_u(f) - \tau_u(g^{-1})] + [\tau_u(gh^{-1}) - \tau_u(g) - \tau_u(h^{-1})] +[\tau_u(hf^{-1}) - \tau_u(h) - \tau_u(f^{-1})],
\]
and part (a) applies to each of the three summands (delimited by square brackets), it follows that $|\tau_u(fg^{-1}) + \tau_u(gh^{-1}) + \tau_u(hf^{-1})| \neq 1/2$ only if the three summands have the same sign. However, this would imply $x^{-1}..y^{-1}..z^{-1}..x^{-1}$ in $u$ (if all three summands are positive) or $x^{-1}..z^{-1}..y^{-1}..x^{-1}$ in $u$ (if all three summands are negative), neither of which is possible.
\end{proof}

\begin{proof}[Proof of Theorem~\ref{t:tau-is-quasi}]
If $g=e$ or $h=e$, then $| \tau_u(g)+\tau_u(h)- \tau_u(gh) | = \tau_u(e)=0$. If $gh=e$, then, by Lemma~\ref{l:gginv}(c), $|\tau_u(g)+\tau_u(h)- \tau_u(gh) | = 0$. The claim that $| \tau_u(g)+\tau_u(h)- \tau_u(gh) |$ is equal to $1/2$, when $g \neq e$, $h \neq e$, and $gh \neq e$, is Lemma~\ref{l:defect}(b) in disguise. Indeed, for any two nontrivial reduced words $g$ and $h$ for which $gh \neq e$, there exists three reduced words $f_1$, $g_1$, and $h_1$ such that the reduced forms of $g$, $h$ and $gh$ are $g=g_1h_1^{-1}$, $h=h_1f_1^{-1}$, and $gh=g_1f_1^{-1}$, respectively (the subwords $h_1^{-1}$ in $g$ and $h_1$ in $h$ represent the parts of $g$ and $h$ that are canceled in the product $gh$ to reach its reduced form $g_1f_1^{-1}$). By Lemma~\ref{l:gginv}(c) and Lemma~\ref{l:defect}(b),
\[
 | \tau_u(g)+\tau_u(h)- \tau_u(gh) | = | \tau_u(g_1h_1^{-1}) + \tau_u(h_1f_1^{-1}) + \tau_u(f_1g_1^{-1}) | = 1/2. \qedhere
\]
\end{proof}

%--------------------------------------------------------------------------

\section{Functions with small relative defect and weights that define orders}\label{s:criterion}

Since, for every positive constant $c$, a quasi-character $\tau:G \to \RR$ of defect $D$ can be rescaled ($(c\tau)(g)= c\tau(g)$) to a quasi-character of defect $cD$, it sometimes makes more sense to look at relative measures of ``smallness'' of quasi-characters. For our purposes the following notion (applicable to arbitrary real functions on groups, and not only to quasi-characters) sufficies.

\begin{definition}
Let $G$ be a group. A function $\tau: G \to \RR$ has \emph{small relative defect} if
\begin{enumerate}
\item[\textup{(i)}] $\tau(e)=0$, and
\item[\textup{(ii)}] if $g \neq e$ or $h \neq e$, then
\[
 |\tau(g) + \tau(h) - \tau(gh)| < |\tau(g)| + |\tau(h)|.
\]
\end{enumerate}
\end{definition}

\begin{remark}
We can think of the quantity $\delta(g,h) = |\tau(g)+\tau(h)-\tau(gh)|$ as the defect of $\tau$ at the pair $(g,h)$. Condition (ii) only considers the relative size of the defect of $\tau$ at $(g,h)$ by comparing it to the size of the values of $\tau$ at $g$ and $h$ (hence the name small relative defect).
\end{remark}

\begin{example}
If $\tau:G \to \RR$ is a group homomorphism, then $\tau$ is a function of small relative defect if and only if it is an embedding.

For instance, if $\vect{u} \in \RR^n$ is a vector with coordinates that are linearly independent over $\QQ$, then $\tau_{\vect{u}}: \ZZ^n \to \RR$ given by $\tau_{\vect{u}}(\vect{v}) = \vect{u}\cdot \vect{v}$, where $\cdot$ is the usual dot product, is a function of small relative defect. Of course, every character of $\ZZ^n$ that is a function of small relative defect arises in this way.
\end{example}

Call a function $\tau: G \to \RR$ \emph{proper} if
\[ \tau(g) = 0 \qqiff g=e. \]
Note that every function of small relative defect is proper. Indeed, by setting $h=e$ in condition (ii), we obtain, for every $g \in G$, with $g \neq e$,
\[
 0  < |\tau(g)|.
\]

\begin{example}
No Brooks quasi-character $\phi_w$ of $F_k$, $k \geq 2$, is a function of small relative defect. In fact, no such quasi-character is proper, as there is always a letter $x$ in $\AA$ for which $\phi_w(x)=0$.
\end{example}

\begin{proposition}\label{p:small-delta}
Let $G$ be a group. If $\tau: G \to \RR$ is a quasi-character which is a proper function and whose defect satisfies the inequality
\begin{equation}\label{e:2-inf}
 \delta(\tau) < 2 \inf_{g \neq e}{\tau(g)},
\end{equation}
then $\tau$ is a function of small relative defect.
\end {proposition}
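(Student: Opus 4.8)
The plan is to verify the two defining conditions of small relative defect directly, isolating the degenerate cases (where one argument is the identity) from the generic case. Throughout I write $\delta = \delta(\tau)$ for the defect and $m = \inf_{g \neq e} |\tau(g)|$, so that the hypothesis reads $\delta < 2m$; the infimum here must be understood as taken over the absolute values $|\tau(g)|$, since a proper quasi-character satisfies $\tau(g^{-1}) = -\tau(g) + O(\delta)$ and hence takes values of both signs, which would render the literal infimum of $\tau(g)$ nonpositive and the stated inequality unsatisfiable. Condition (i), namely $\tau(e) = 0$, I would dispose of in one line at the start: properness gives $\tau(g) = 0$ precisely when $g = e$, so in particular $\tau(e) = 0$.

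For condition (ii) I would fix $g, h \in G$ with $g \neq e$ or $h \neq e$ and split into two cases. If exactly one of $g, h$ equals $e$ — say $h = e$, the other case being symmetric — then $\tau(h) = 0$ and $gh = g$, so the left-hand side $|\tau(g) + \tau(h) - \tau(gh)|$ collapses to $|\tau(e)| = 0$, while the right-hand side equals $|\tau(g)| + |\tau(h)| = |\tau(g)|$. Since $g \neq e$, properness forces $|\tau(g)| > 0$, so the strict inequality $0 < |\tau(g)|$ holds. This is the one place the argument genuinely needs properness rather than merely the defect bound, because here the defect side is exactly zero and only properness guarantees the value side is strictly positive.

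In the remaining case both $g$ and $h$ are nontrivial, and here I would simply chain the two available bounds. The quasi-character defect bound gives $|\tau(g) + \tau(h) - \tau(gh)| \leq \delta$, while $|\tau(g)| \geq m$ and $|\tau(h)| \geq m$ by the definition of $m$, so that $|\tau(g)| + |\tau(h)| \geq 2m$. Combining these with the hypothesis $\delta < 2m$ yields
\[
 |\tau(g) + \tau(h) - \tau(gh)| \leq \delta < 2m \leq |\tau(g)| + |\tau(h)|,
\]
which is exactly condition (ii), completing the verification.

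There is essentially no hard step here: the proof is a short case split in which the generic case follows immediately from the hypothesis and the defect bound, and the degenerate case is handled by properness alone. The only point requiring a moment's care is recognizing that the cases with one trivial argument must be separated out, since there the defect contribution vanishes identically and the bound $\delta < 2m$ is useless — it is properness that rescues the strict inequality in precisely those cases.
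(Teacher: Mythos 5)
Your proof is correct, and its skeleton is the same as the paper's: condition (i) and the degenerate instances of condition (ii) are handled by properness alone, and the generic instance (both $g$ and $h$ nontrivial) follows by chaining the defect bound against the hypothesis. Where you genuinely depart from the paper is in reading the infimum as $m=\inf_{g \neq e}|\tau(g)|$, and this departure is both justified and necessary. Taken literally, the hypothesis $\delta(\tau) < 2 \inf_{g \neq e}\tau(g)$ is unsatisfiable on any nontrivial group: properness gives $\tau(e)=0$, so for $g \neq e$ the defect bound at the pair $(g,g^{-1})$ gives $\tau(g)+\tau(g^{-1}) \leq \delta(\tau) < 2\inf_{f \neq e}\tau(f) \leq \tau(g)+\tau(g^{-1})$, a contradiction; moreover, the way the proposition is invoked in Corollary~\ref{c:tau-is-small}, where $2\inf_{g \neq e}\tau_u(g)$ is declared to equal $1$ even though $\tau_u$ takes the value $-1/2$ on negative letters, confirms that absolute values are intended. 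Under that intended reading, the paper's own concluding chain, $\delta(g,h) \leq \delta(\tau) < 2 \inf_{f \neq e}\tau(f) \leq \tau(g)+\tau(h)$, does not go through as written, since $\tau(g)+\tau(h)$ can be negative and hence smaller than $2\inf_{f \neq e}|\tau(f)|$; your chain, ending with $2m \leq |\tau(g)|+|\tau(h)|$, is the one that actually closes the argument, because condition (ii) compares the defect against $|\tau(g)|+|\tau(h)|$ and not against $\tau(g)+\tau(h)$. One small quibble: your parenthetical claim that a proper quasi-character must take values of both signs is not quite right --- the function equal to $1/2$ off the identity and $0$ at the identity is a proper quasi-character that is everywhere nonnegative --- but the conclusion you draw from it, namely that the literal inequality cannot hold on a nontrivial group, is correct by the $(g,g^{-1})$ argument above, so this looseness affects only your motivating remark and not the proof itself.
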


\begin{proof}
Condition (i) and condition (ii), when $g=e$ or $h=e$ (but not both), are satisfied for every proper function, and condition (ii), when $g \neq e$ and $h \neq e$, is satisfied since, in that case,
\[
 \delta(g,h) \leq \delta(\tau) < 2 \inf_{f \neq e}{\tau(f)} \leq \tau(g)+\tau(h). \qedhere
\]
\end{proof}

\begin{corollary}\label{c:tau-is-small}
For each oriented word $u$ of rank $k$, $k \geq 2$, the quasi-character $\tau_u$ is a function of small relative defect.
\end{corollary}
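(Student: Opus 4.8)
The plan is to verify conditions (i) and (ii) in the definition of small relative defect by hand, using the exact value of the defect furnished by Theorem~\ref{t:tau-is-quasi} together with a single observation about the range of $\tau_u$. I should stress at the outset that Proposition~\ref{p:small-delta} cannot be applied verbatim: by Lemma~\ref{l:gginv}(c) the weight is symmetric, so $\tau_u$ takes negative values and $\inf_{g\neq e}\tau_u(g)\leq-1/2<0$, which makes the hypothesis $\delta(\tau_u)<2\inf_{g\neq e}\tau_u(g)$ false. What rescues the argument is that, although $\tau_u$ is not positive, $|\tau_u|$ is bounded below by $1/2$ on nontrivial elements, and the verification must therefore be carried out with absolute values in place of values.

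First I would record that lower bound. For a nontrivial reduced word $g=x_1\cdots x_n$ the proposition expressing $\tau_u$ through weight contributions gives $\tau_u(g)=\sum_{i=1}^{n-1}\wtc_u(x_ix_{i+1})+\omega(g)$. Each $\wtc_u(x_ix_{i+1})$ lies in $\{-1,0,1\}$, so the sum is an integer, while $\omega(g)=\pm1/2$ for every nontrivial $g$. Hence $\tau_u(g)\in\ZZ+\frac12$ whenever $g\neq e$; in particular $\tau_u(g)\neq0$ and $|\tau_u(g)|\geq1/2$. Since $\tau_u(e)=0$, this already shows $\tau_u$ is proper, giving condition (i) and settling condition (ii) in the case where exactly one of $g,h$ is trivial (there the defect is $0$, strictly below the positive number $|\tau_u(g)|+|\tau_u(h)|$).

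It remains to check condition (ii) when $g\neq e$ and $h\neq e$. If $gh=e$ then Theorem~\ref{t:tau-is-quasi} gives defect $0$, again strictly below $|\tau_u(g)|+|\tau_u(h)|>0$. If $gh\neq e$, then Theorem~\ref{t:tau-is-quasi} pins the defect at exactly $1/2$, while the bound from the previous step yields $|\tau_u(g)|+|\tau_u(h)|\geq\frac12+\frac12=1>\frac12$. Thus the strict inequality of condition (ii) holds in every case, and the corollary follows.

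There is no genuine difficulty in this argument, since all the analytic content has been absorbed into Theorem~\ref{t:tau-is-quasi}; the only point demanding care is the sign issue flagged above. The decisive quantitative input is the integrality of the weight-contribution part of $\tau_u$: it is precisely this that forces the half-integer values, hence the clean bound $|\tau_u|\geq1/2$, and thereby keeps the relative defect small even though the defect $1/2$ and the minimal absolute value $1/2$ are of the same order.
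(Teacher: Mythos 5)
Your proof is correct, and your reservation about Proposition~\ref{p:small-delta} is well founded. The paper's own proof is the one-liner ``$\delta(\tau_u)=1/2<1=2\inf_{g\neq e}\tau_u(g)$, and $\tau_u$ is proper, so Proposition~\ref{p:small-delta} applies''; as you observe, this is not literally valid: $\tau_u$ is symmetric by Lemma~\ref{l:gginv}(c), and already $\tau_u(A_1)=\omega(A_1)=-\tfrac{1}{2}$, so $\inf_{g\neq e}\tau_u(g)\leq-\tfrac{1}{2}<0\leq\delta(\tau_u)$ and the hypothesis of the proposition can never hold for $\tau_u$. The quantity intended there, both in the proposition and in the corollary, is clearly $\inf_{g\neq e}|\tau_u(g)|$; with absolute values inserted the proposition's proof goes through verbatim ($\delta(g,h)\leq\delta(\tau)<2\inf_{f\neq e}|\tau(f)|\leq|\tau(g)|+|\tau(h)|$), and the corollary then follows just as the paper claims. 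Your write-up carries out exactly this repaired argument, but directly from the definition rather than through the proposition, and it also supplies the fact the paper uses tacitly: $\tau_u(g)\in\ZZ+\tfrac{1}{2}$ for $g\neq e$ (an integer sum of weight contributions plus $\omega(g)=\pm\tfrac{1}{2}$), hence $|\tau_u(g)|\geq\tfrac{1}{2}$ and $\tau_u$ is proper. Your case analysis (exactly one of $g,h$ trivial; both nontrivial with $gh=e$; both nontrivial with $gh\neq e$) is exhaustive and each comparison against the exact defect values of Theorem~\ref{t:tau-is-quasi} is right. What the paper's route buys, once corrected, is a reusable criterion separating the general mechanism from the bookkeeping; what yours buys is a self-contained and genuinely watertight verification.
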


\begin{proof}
By Theorem~\ref{t:tau-is-quasi}, $\delta(\tau_u) = 1/2 < 1 = 2 \inf_{g \neq e}{\tau(g)}$. Since $\tau_u$ is also a proper function, it has small relative defect, by Proposition~\ref{p:small-delta}.
\end{proof}

The reason we are interested in functions with small relative defect is that they can be used as weight functions that define orders on groups.

\begin{proposition}\label{p:criterion}
Let $G$ be a group and $\tau:G \to \RR$ a function with small relative defect. Then $G$ is orderable and an order on $G$ may be defined by declaring
\[
 P = \{\ g \in G \mid \tau(g)>0 \ \}
\]
to be the positive cone of $G$.
\end{proposition}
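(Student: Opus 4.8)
The plan is to verify directly that $P$ satisfies the two defining properties of a positive cone, namely closure under multiplication and trichotomy, after which the general correspondence between positive cones and orders recalled in Section~\ref{s:background} finishes the argument. Throughout I would lean on the fact, already noted in the text, that a function of small relative defect is proper, so that $\tau(g)=0$ precisely when $g=e$. This immediately isolates $\{e\}$ as the set on which $\tau$ vanishes and reduces trichotomy to a statement about the signs of $\tau(g)$ and $\tau(g^{-1})$.

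For trichotomy, the key move is to feed $h=g^{-1}$ into condition (ii). Since $gg^{-1}=e$ and $\tau(e)=0$, condition (ii) becomes
\[
 |\tau(g) + \tau(g^{-1})| < |\tau(g)| + |\tau(g^{-1})|
\]
for every $g \neq e$. The strict triangle inequality for real numbers holds exactly when the two summands are nonzero and of opposite sign; since properness guarantees $\tau(g), \tau(g^{-1}) \neq 0$, I conclude that $\tau(g)$ and $\tau(g^{-1})$ always carry opposite signs. Hence, for $g \neq e$, exactly one of $\tau(g)>0$, $\tau(g^{-1})>0$ holds, which is precisely the assertion $G = P^{-1} \sqcup \{e\} \sqcup P$ (with $P^{-1}$ the set where $\tau$ is negative).

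For closure, suppose $g,h \in P$, so that $\tau(g), \tau(h) > 0$ and in particular $g,h \neq e$. Applying condition (ii) and using $|\tau(g)| + |\tau(h)| = \tau(g) + \tau(h)$, I obtain $|\tau(g) + \tau(h) - \tau(gh)| < \tau(g) + \tau(h)$. Writing $A = \tau(g) + \tau(h) > 0$, the inequality $|A - \tau(gh)| < A$ unwinds to $0 < \tau(gh) < 2A$, and the left half is exactly what is needed to conclude $gh \in P$.

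The only real subtlety, and the step I expect to be the crux, is recognizing that the strictness built into condition (ii) is not cosmetic but encodes sign information: it is the strict triangle inequality $|a+b| < |a| + |b|$, together with properness forcing both terms nonzero, that pins down the opposite signs of $\tau(g)$ and $\tau(g^{-1})$ and thereby delivers trichotomy. Everything else is a short sign-chasing computation, and no structure on $G$ beyond being a group is used, so the same argument simultaneously shows that $G$ is orderable.
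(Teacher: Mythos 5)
Your proof is correct and takes essentially the same route as the paper: it sets $h=g^{-1}$ in condition (ii) to force $\tau(g)$ and $\tau(g^{-1})$ to have opposite signs (giving trichotomy, with properness isolating $\{e\}$), and then applies condition (ii) to $g,h \in P$ to get closure. The only cosmetic difference is that you package the sign argument as the equality case of the triangle inequality, whereas the paper phrases it as a contradiction; the content is identical.
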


\begin{proof}
Recall that functions with small relative defect are proper.

By setting $h=g^{-1}$ in condition (ii), we obtain, for every $g \in G$, with $g \neq e$,
\begin{equation}\label{e:gginv-bound}
 |\tau(g)+\tau(g^{-1})| < |\tau(g)|+|\tau(g^{-1})|.
\end{equation}
If $\tau(g)$ and $\tau(g^{-1})$ are both positive or both negative, then $|\tau(g)+\tau(g^{-1})| = |\tau(g)|+|\tau(g^{-1})|$, contradicting inequality~\eqref{e:gginv-bound}. Therefore, $\tau(g)$ and $\tau(g^{-1})$ have opposite signs when $g \neq e$, $P^{-1} = \{ g \in G \mid \tau(g)<0 \}$, and $G$ decomposes into a disjoint union
\[
 G = P^{-1} \sqcup \{e\} \sqcup P.
\]

It remains to show that $P$ is closed under multiplication. Let $g,h \in P$. Then $\tau(g)>0$, $\tau(h)>0$, $g \neq e$, $h \neq e$ and, by condition (ii), $\tau(g) + \tau(h) - \tau(gh) < \tau(g)+\tau(h)$, which implies $\tau(gh) > 0$, hence $gh \in P$.
\end{proof}

\begin{remark}
In a trivial way, every order $\leq $ on every orderable group $G$ comes from a quasi-character of small relative defect. Namely, the function $\tau: G \to \RR$ that evaluates to 1, 0 and -1 on the positive cone, identity, and the negative cone, respectively, is a quasi-character of small relative defect that defines $\leq$ when used as a weight function.
\end{remark}

A direct corollary of Proposition~\ref{p:criterion} and Corolary~\ref{c:tau-is-small} is the following result, already announced in Section~\ref{s:orders}.

\begin{theorem}\label{t:is-order}
Let $u$ be an oriented word of rank $k$, $k \geq2$. The relation $\leq_u$ on $F_k$ defined by
\[
 g \leq_u h \qqiff \tau_u(g^{-1}h) \geq 0
\]
is a left order on $F_k$ with positive cone $P_u=\{g \in G \mid \tau(g)>0\}$.
\end{theorem}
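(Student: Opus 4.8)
The plan is to treat this as the direct corollary it is advertised to be: I would chain together Corollary~\ref{c:tau-is-small} and Proposition~\ref{p:criterion}, and then reconcile the abstract positive-cone formulation of the latter with the concrete relational formulation in the statement.

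First I would invoke Corollary~\ref{c:tau-is-small} to record that, for every oriented word $u$ of rank $k$ with $k \geq 2$, the weight $\tau_u$ is a function of small relative defect. Applying Proposition~\ref{p:criterion} with $G = F_k$ and $\tau = \tau_u$ then immediately yields that $F_k$ is orderable and that $P_u = \{\, g \in F_k \mid \tau_u(g) > 0 \,\}$ satisfies the closure and trichotomy properties, so that it is the positive cone of an order on $F_k$. By the positive-cone/order correspondence recorded in Section~\ref{s:background} (and the convention fixed there that ``order'' means ``left order''), this is the left order given by $g <_u h \iff g^{-1}h \in P_u$, and $P_u$ is exactly its positive cone.

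It remains only to check that this cone-induced left order coincides with the relation $\leq_u$ as written, namely $g \leq_u h \iff \tau_u(g^{-1}h) \geq 0$. Unwinding the order from its positive cone, $g \leq_u h$ holds precisely when $g^{-1}h \in P_u$ or $g = h$, i.e.\ when $\tau_u(g^{-1}h) > 0$ or $g^{-1}h = e$. Here I would use that every function of small relative defect is proper (as noted just before Proposition~\ref{p:criterion}), so that $g^{-1}h = e$ is equivalent to $\tau_u(g^{-1}h) = 0$. Merging the two cases gives $g \leq_u h \iff \tau_u(g^{-1}h) \geq 0$, exactly as claimed.

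Since every step reduces to a direct appeal to an already-established result, there is no genuine obstacle; the statement is essentially a bookkeeping consequence of the preceding development. The one spot deserving attention is the equality case above: the passage from the strict inequality defining $P_u$ to the non-strict inequality $\geq 0$ in the definition of $\leq_u$ relies on properness to exclude nontrivial elements of weight zero. Left-invariance itself needs no separate argument, as it is built into the positive-cone construction.
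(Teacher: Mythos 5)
Your proposal is correct and is exactly the paper's argument: the paper states Theorem~\ref{t:is-order} as a direct corollary of Proposition~\ref{p:criterion} and Corollary~\ref{c:tau-is-small}, which is precisely the chain you use. Your extra care in reconciling the non-strict relation $\tau_u(g^{-1}h)\geq 0$ with the strict positive cone via properness is a sound (and welcome) elaboration of a step the paper leaves implicit.
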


%-----------------------------------------------------------

\section{Lexicographically based orders}\label{s:lex}

We characterize all oriented words of rank $k$ that induce orders extending the lexicographic order on the monoid $\AA^*$ based on $a_1<a_2<\dots<a_k$ (Proposition~\ref{p:lexicographic}). There are exactly $k!$ such oriented words.

We start by observing that the order among the positive (negative) generators agrees with their relative position in the defining word.

\begin{lemma}\label{l:generator-order}
Let $u$ be an oriented word of rank $k$, $k \geq 2$. For $1 \leq i,j\leq k$, $i \neq j$,
\begin{gather*}
 a_i <_u a_j \iff a_i .. a_j \textup{ in } u, \\
 A_i <_u A_j \iff A_i .. A_j \textup{ in } u.
\end{gather*}
\end{lemma}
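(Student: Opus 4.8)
The plan is to reduce each of the two equivalences to a single weight computation and then read the answer off the weight-contribution description of $\tau_u$. By Theorem~\ref{t:is-order}, an element lies in the positive cone $P_u$ exactly when its $\tau_u$-value is positive, so $a_i <_u a_j$ if and only if $\tau_u(a_i^{-1}a_j) > 0$, and $A_i <_u A_j$ if and only if $\tau_u(A_i^{-1}A_j) > 0$. The crucial simplification is that, because $i \neq j$, both $a_i^{-1}a_j = A_i a_j$ and $A_i^{-1}A_j = a_i A_j$ are already reduced two-letter words; there is no free cancellation to track, so the expression $\tau_u(g) = \sum_{i} \wtc_u(x_i x_{i+1}) + \omega(g)$ applies directly to these length-two words.

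First I would handle the positive generators. Applying the weight-contribution formula to $A_i a_j$ gives $\tau_u(A_i a_j) = \wtc_u(A_i a_j) + \omega(A_i a_j)$. The last letter $a_j$ is positive, so $\omega(A_i a_j) = 1/2$. Since $A_i a_j$ starts in a negative letter, its weight contribution is $-1$ if $a_j..a_i$ in $u$ and $0$ if $a_i..a_j$ in $u$. Hence $\tau_u(A_i a_j)$ equals $-1/2$ in the first case and $+1/2$ in the second, so it is positive precisely when $a_i..a_j$ in $u$, which is the asserted equivalence.

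The negative generators are entirely symmetric. For $a_i A_j$ the last letter $A_j$ is negative, so $\omega(a_i A_j) = -1/2$, and since $a_i A_j$ starts in a positive letter, its weight contribution is $1$ if $A_i..A_j$ in $u$ and $0$ if $A_j..A_i$ in $u$. Thus $\tau_u(a_i A_j)$ is $+1/2$ in the first case and $-1/2$ in the second, positive precisely when $A_i..A_j$ in $u$, as required.

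There is no serious obstacle here: the entire content of the lemma is captured by the length-two weight computation, and the only point requiring a moment's care is the observation that $i \neq j$ guarantees the relevant two-letter words are reduced, so the weight-contribution formula may be invoked without first passing to a reduced form.
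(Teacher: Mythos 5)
Your proof is correct and follows essentially the same route as the paper: both reduce $a_i <_u a_j$ to the positivity of $\tau_u(A_i a_j)$ via the positive-cone definition of the order, and then read off the sign from the weight-contribution table together with the $\pm 1/2$ last-letter weight (and symmetrically for $a_iA_j$). The paper merely compresses this into a chain of equivalences, observing that positivity of $\tau_u(A_ia_j)$ is equivalent to $\wtc_u(A_ia_j)=0$ and that of $\tau_u(a_iA_j)$ to $\wtc_u(a_iA_j)=1$, which is exactly the computation you carry out explicitly.
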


\begin{proof}
Indeed,
\begin{gather*}
 a_i <_u a_j \iff e <_u A_ia_j \iff \wtc_u(A_ia_j)=0 \iff a_i..a_j \text{ in } u, \\
 A_i <_u A_j \iff e <_u a_iA_j \iff \wtc_u(a_iA_j)=1 \iff A_i..A_j \text{ in } u. \qedhere
\end{gather*}
\end{proof}

\begin{proposition}\label{p:lexicographic}
The order induced by an oriented word $u$ of rank $k$, $k \geq 2$, is an extension of the lexicographic order on the monoid $\AA^*$ based on $a_1<a_2< \dots< a_k$ if and only if the word $u$ has the form
\begin{equation}\label{e:W-lexform}
 u = a_1a_2\dots a_k A_{j_1}A_{j_2}\dots A_{j_k},
\end{equation}
where $j_1,j_2,\dots,j_k$ is some permutation of $1,2,\dots,k$.
\end{proposition}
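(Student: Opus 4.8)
I want to characterize exactly when $\leq_u$ extends the lexicographic order on $\AA^*$. Recall that in the lexicographic order on the monoid based on $a_1 < a_2 < \dots < a_k$, for two positive words $p,q$ we have $p < q$ iff $p$ is a proper prefix of $q$, or at the first position where they differ the letter of $p$ is smaller than that of $q$. My approach is to split the proof into the two implications, using the weight-contribution description $\tau_u(g) = \sum_i \wtc_u(x_i x_{i+1}) + \omega(g)$ and Lemma~\ref{l:generator-order} as the main tools.

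\textbf{Necessity.} Suppose $\leq_u$ extends the lexicographic order. First, the lexicographic order demands $a_1 <_u a_2 <_u \dots <_u a_k$, so by Lemma~\ref{l:generator-order} we get $a_1 .. a_2 .. \dots .. a_k$ in $u$; that is, the lowercase letters appear in $u$ in the natural order $a_1, a_2, \dots, a_k$. It remains to force all lowercase letters to precede all uppercase letters, i.e.\ $u = a_1 \dots a_k A_{j_1} \dots A_{j_k}$. The cleanest way is to test a relation that lexicographic order mandates but that $\wtc$ would violate if some uppercase letter $B$ occurred before some lowercase letter $a$ in $u$. In lexicographic order $a <_u ab$ for positive generators $a,b$ (a word is smaller than any of its extensions), equivalently $e <_u A \cdot ab = b$ after left-cancellation — but more directly I would compute $\tau_u(A \, ab)$: we have $a <_u ab$ iff $e <_u a^{-1}(ab) = b$, which is automatic, so that test is vacuous. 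Instead I test extension by a generator on the right: lexicographic order gives $ab <_u abc$ type relations, but again these reduce trivially. The genuinely informative test is comparing $a b$ with $a c$ for $b \neq c$: lexicographically $ab <_u ac \iff b <_u c$, and since $b <_u c \iff b .. c$ in $u$ (Lemma~\ref{l:generator-order}), this is automatically consistent and imposes nothing new. So the real constraint must come from comparing a word with one of its prefixes, forcing positivity of things like $\tau_u(A_k \, a_1) $, whose sign via Table~\ref{tbl:wtc} detects the relative position of $A_k$ and $A_1$; I would instead directly argue that if some $A_j$ preceded some $a_i$ in $u$, then consider the two-letter comparison showing $a_i <_u a_i a_\ell$ fails or that a word and its prefix are misordered, using $\wtc_u(a_i A_j) = 0$ (which holds iff $A_i .. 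A_j$) to derive a contradiction with the prefix-monotonicity of the lexicographic order. The key obstruction to pin down is precisely this last-letter/prefix interaction, and I expect it to be the delicate step.

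\textbf{Sufficiency.} Conversely, assume $u = a_1 a_2 \dots a_k A_{j_1} \dots A_{j_k}$. I must show that for all positive words $p, q \in \AA^*$, $p <_{\mathrm{lex}} q \implies p <_u q$, i.e.\ $\tau_u(p^{-1} q) > 0$. Here the structure of $u$ makes $\wtc$ very transparent: since every lowercase letter precedes every uppercase letter, for positive letters $a, b$ we have $A .. b$ always fails, so $\wtc_u(ab)$ follows the top pattern and $\beta_u$ simplifies; and since the lowercase letters are in natural order, $\wtc_u(A_i a_j) = 0 \iff a_i .. a_j \iff i < j$. The plan is to take the reduced form of $p^{-1} q$. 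If $p$ is a prefix of $q$, then $p^{-1} q$ is a positive word and I check its weight is positive using $\omega = 1/2$ and the nonnegativity of each $\wtc$ of a positive-letter pair. If $p$ and $q$ first differ at position $m$ with $p_m = a_i$, $q_m = a_j$, $i < j$, then the reduced form of $p^{-1} q$ begins with uppercase letters (the reversed inverse tail of $p$) followed by the tail of $q$, and the governing contribution is the single case-transition $\wtc_u(A_{i'} a_j)$ at the junction between the two tails, which equals $0$ exactly when the lowercase order places $a_{i'}$ before $a_j$. I would carefully track all $\wtc$ terms and $\omega$ to show the total is strictly positive; since each term is in $\{-1, 0, 1\}$ and the structure forbids the sign configuration that would make the sum nonpositive, this should go through.

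\textbf{Main obstacle.} The principal difficulty is the bookkeeping in sufficiency: the reduced form of $p^{-1}q$ mixes uppercase letters (from $p^{-1}$) and lowercase letters (from $q$), so I must verify that the case-transition contributions coming from the all-uppercase prefix, the single lowercase-uppercase junction, and the all-lowercase suffix combine with $\omega$ to give a strictly positive total regardless of the permutation $j_1, \dots, j_k$. The fact that the permutation of the uppercase letters is \emph{unconstrained} is the crucial payoff to confirm — I expect that the uppercase ordering only ever affects $\wtc$ terms of the form $\wtc_u(A_r A_s)$ and $\wtc_u(a_r A_s)$ in ways that cancel or do not lower the decisive positive contribution, which is exactly why all $k!$ choices of the tail permutation work.
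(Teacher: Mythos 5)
Your necessity argument has a genuine gap, and the route you sketch for closing it is a dead end. After correctly using Lemma~\ref{l:generator-order} to pin down the order of the lowercase letters, you never produce the argument forcing every uppercase letter to follow every lowercase letter in $u$ --- you run through several tests you yourself recognize as vacuous and then defer the "delicate step." Worse, the direction you point to (a violation of prefix-monotonicity, i.e.\ of $p <_u pr$) cannot possibly yield the contradiction: for \emph{every} oriented word $u$, lex-form or not, a nonempty positive word $r$ satisfies $\tau_u(r) \geq \tfrac12 > 0$, since every weight contribution $\wtc_u(ay)$ of a two-letter word beginning with a lowercase letter is $0$ or $1$ and $\omega(r)=\tfrac12$; hence $e <_u r$ and $p <_u pr$ hold universally, so prefix comparisons can never distinguish lex-form words from the others. (Also, your parenthetical claim that $\wtc_u(a_iA_j)=0$ iff $A_i..A_j$ is backwards: by Table~\ref{tbl:wtc}, $A_i..A_j$ iff $\wtc_u(a_iA_j)=1$.) The contradiction must instead come from comparing words with \emph{different first letters}, which is exactly what the paper does: if some $A_i..a_k$ in $u$, then $\wtc_u(a_ia_k)=1$, and this single positive letter-transition contribution gives
\[
 \tau_u(A_2a_1a_ia_k) = \wtc_u(A_2a_1)+\wtc_u(a_1a_i)+\wtc_u(a_ia_k)+\tfrac12 \geq -1+0+1+\tfrac12 > 0,
\]
i.e.\ $a_2 <_u a_1a_ia_k$, whereas lexicographically $a_1a_ia_k < a_2$ because it begins with $a_1<a_2$. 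The conceptual point you are missing is that a contribution $\wtc_u(ab)=1$ (caused by $A..b$ in $u$) lets positive words starting with a small generator overtake a larger generator --- lex order forbids this, while prefix-monotonicity is never at risk.

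Your sufficiency direction, by contrast, is in outline the paper's own argument: for $u=a_1\dots a_kA_{j_1}\dots A_{j_k}$ the only negative contributions are $\wtc_u(A_ra_s)=-1$ with $s<r$, and none of these occurs in the reduced words $a_ig$ (prefix case) or $g^{-1}A_ia_jh$ with $i<j$ (first-difference case), so each has weight exactly $\tfrac12>0$; combined with left-invariance of $\leq_u$ this covers all lexicographic comparisons. The bookkeeping you defer is routine, and your worry about the tail permutation $j_1,\dots,j_k$ resolves immediately: uppercase-uppercase transitions $\wtc_u(A_rA_s)$ vanish because every lowercase letter precedes every uppercase letter, regardless of the permutation.
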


\begin{proof}($\Longrightarrow$)
Let the order $\leq_u$ on $F_k$ induced by the oriented word $u$ of rank $k$ be an extension of the lexicographic order based on $a_1<a_2< \dots< a_k$.

The relative order of the positive letters in $u$ must be exactly $a_1,a_2,\dots,a_k$, by Lemma~\ref{l:generator-order}.

By way of contradiction, assume $A_i ..a_k$ in $u$, for some $i$, $1\leq i \leq k$. This implies that $\wtc_u(a_ia_k)=1$ and, therefore,
\[
 \tau(A_2a_1a_ia_k) = \wtc_u(A_2a_1) + \wtc_u(a_1a_i) + \wtc_u(a_ia_k) + \frac{1}{2} \geq -1+0+1+\frac{1}{2} = \frac{1}{2}>0,
\]
which implies that
\[
 a_2 <_u a_1a_ia_k,
\]
violating the lexicographic order, a contradiction.

($\Longleftarrow$) Let $u$ be an oriented word of the form~\eqref{e:W-lexform}.

The structure of the word $u$ implies that the only negative contributions to the induced weight $\tau_u$ are given by subwords of the form $A_ja_i$, for $1\leq i < j \leq k$. Therefore, for any words $g$ and $h$ over $\AA$, and $1\leq i < j \leq k$,
\[
 0 < \tau_u(a_ig) \qquad \text{and} \qquad 0 < \tau_u(g^{-1}A_ia_jh),
\]
which implies that, for any words $g$ and $h$ over $\AA$, and $1\leq i < j \leq k$,
\[
 e <_u a_ig <_u a_jh,
\]
and this shows that $\leq_u$ is compatible with the lexicographic order on the monoid $\AA^*$ based on $a_1<a_2< \dots< a_k$.
\end{proof}

\begin{remark}
It is clear that, more generally, the order induced by an oriented word $u$ of rank $k$ is an extension of the lexicographic order on the monoid $\AA^*$ based on $a_{i_1}<a_{i_2}< \dots< a_{i_k}$,  where $i_1,i_2,\dots,i_k$ is some fixed permutation of $1,2,\dots,k$, if and only if the word $u$ has the form
\begin{equation}\label{e:u-lexxform}
 u = a_{i_1}a_{i_2}\dots a_{i_k} A_{j_1}A_{j_2}\dots A_{j_k},
\end{equation}
where $j_1,j_2,\dots,j_k$ is some permutation of $1,2,\dots,k$.

Note that the words of the form~\eqref{e:u-lexxform} are precisely the words $u$ for which the letter transition weight $\beta_u$ vanishes.
\end{remark}

%---------------------------------------------------------------------------

\section{Inverting the defining word and reversible orders}\label{s:rev}

For a group word $g$, denote by $g^{-1}$ its inverse, by $g^\rev$ its reversal and by $g^\tog$ its case switch (i.e., the word obtained when the case of each letter in $g$ is switched). Note that these three operations on words are involutions, they commute, and product of any two is the third one.

We provide a precise relation between the positive cones $P_u$ and $P_{u^{-1}}$ associated to a word $u$ and its inverse $u^{-1}$. Namely these two cones can be obtained from each other by word reversal (Theorem~\ref{t:inverse-reverse}).

A \emph{reversible order} on the free group $F_k$ of rank $k$ is an order on $F_k$ for which the positive cone is closed under word reversal (i.e., $e \leq g \iff e \leq g^\rev$). We characterize all oriented words of rank $k$ that induce reversible orders (Proposition~\ref{p:reversible}). There are exactly $2^k \cdot k!$ such oriented words and they are precisely the words $u$ for which $u=u^{-1}$.

\begin{lemma}\label{l:inverse-reverse}
For every oriented word $u$ of rank $k$, $k \geq 2$, and every reduced word $g$ of rank $k$,

(a) $\alpha_u(g) + \alpha_{u^{-1}}(g^\tog) =0$, \hfill $\beta_u(g) + \beta_{u^{-1}}(g^\tog) =0$, \hfill $\omega(g) + \omega(g^\tog) =0$.

(b) $\tau_u(g) + \tau_{u^{-1}}(g^\tog) =0$.

(c) $\tau_u(g) = \tau_{u^{-1}}(g^\rev)$.
\end{lemma}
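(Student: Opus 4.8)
The three involutions $g \mapsto g^{-1}$, $g \mapsto g^\rev$, and $g \mapsto g^\tog$ commute, and the product of any two equals the third. Since part (c) can be rewritten as $\tau_u(g) = \tau_{u^{-1}}(g^\rev)$, and $g^\rev = (g^{-1})^\tog = (g^\tog)^{-1}$, I would derive (c) by combining (b) with the symmetry relation $\tau_{u^{-1}}(f) + \tau_{u^{-1}}(f^{-1}) = 0$ from Lemma~\ref{l:gginv}(c), applied to $f = g^\tog$. Indeed, (b) gives $\tau_u(g) = -\tau_{u^{-1}}(g^\tog)$, while Lemma~\ref{l:gginv}(c) gives $-\tau_{u^{-1}}(g^\tog) = \tau_{u^{-1}}((g^\tog)^{-1}) = \tau_{u^{-1}}(g^\rev)$, and chaining these yields (c). So the real content is parts (a) and (b), and (b) is an immediate consequence of (a) by summing the three equations (since $\tau = \alpha + \beta + \omega$ and $(g^\tog)$ is common to all three, the componentwise cancellations add up).

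The heart of the argument is therefore part (a), which I would prove by tracking how the case-switch operation on $g$ interacts with the inversion operation on the defining word $u$. The key bookkeeping observation is that case-switching $g$ turns each occurrence of a two-letter subword $xy$ into the subword $x^\tog y^\tog$ (switching both letters' cases), so that $\#_{xy}(g) = \#_{x^\tog y^\tog}(g^\tog)$; simultaneously, passing from $u$ to $u^{-1}$ both reverses the order of letters and switches their cases, so the relation ``$A..B$ in $u$'' becomes a correspondingly transformed relation in $u^{-1}$. For the $\beta$ equation this is cleanest: $\beta$ is a sum of Brooks characters $\phi_{ab}$ over the index set $\{A..b \text{ in } u\}$, and I would check that case-switching the argument and inverting the word permute the summands into exactly the negatives of one another, using $\phi_w(g^\tog) = \phi_{w^\tog}(g)$ together with the fact that inverting $u$ sends each $ab$-subword index to its image under the combined reversal-and-case-switch. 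The $\omega$ equation is immediate since switching the case of the last letter flips its sign.

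The $\alpha$ equation is the one requiring care, because $\alpha_u$ is \emph{not} purely a sum of Brooks characters—it mixes $\#_{aB}$ and $\#_{Ba}$ terms indexed by the relative positions of uppercase and lowercase letters in $u$—so I cannot simply invoke a symmetry of $\phi_w$. Here I would argue directly from the defining formula~\eqref{e:alpha}: under $g \mapsto g^\tog$, the subword count $\#_{aB}(g^\tog)$ equals $\#_{Ab}(g)$ and $\#_{Ba}(g^\tog)$ equals $\#_{bA}(g)$ (or the appropriate case-switched partners), and under $u \mapsto u^{-1}$ the index conditions ``$A..B$ in $u$'' and ``$a..b$ in $u$'' transform into their reverse-and-case-switched counterparts, which after reindexing (exchanging the roles of $a$ and $b$, exactly as in the proof of Lemma~\ref{l:gginv}(a)) line up with the original terms carrying opposite signs. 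The main obstacle is exactly this reindexing: I must verify that the positional relations in $u^{-1}$ match the subword-count relabelings in $g^\tog$ term-by-term, and that the signs are consistent across the case transition from lowercase-to-uppercase versus uppercase-to-lowercase. This is a finite, mechanical verification analogous to the one already carried out in Lemma~\ref{l:gginv}(a), so I expect it to go through, but it is where the bulk of the checking lies.
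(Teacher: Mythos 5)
Your plan matches the paper's proof essentially step for step: part (a) by direct manipulation of the defining sums (translating the index conditions via $x..y$ in $u^{-1}$ iff $y^{-1}..x^{-1}$ in $u$, using $\#_{xy}(g^\tog)=\#_{x^\tog y^\tog}(g)$, and reindexing $a\leftrightarrow b$), part (b) by summing the three identities of (a), and part (c) by chaining (b) with Lemma~\ref{l:gginv}(c) applied to $g^\tog$. Your Brooks-character phrasing of the $\beta$ computation is only a cosmetic variant of the paper's explicit subword-count version, so the two arguments are the same in substance.
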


\begin{proof}
(a) We have
\begin{align*}
 \alpha_{u^{-1}}(g^\tog) &=
   \sum_{\substack{a,b \in \AA \\ A..B \textup{ in } u^{-1}}} \#_{aB}(g^\tog) - \sum_{\substack{a,b \in \AA \\ a..b \textup{ in } u^{-1}}} \#_{Ba}(g^\tog) \\
 &=\sum_{\substack{a,b \in \AA \\ b..a \textup{ in } u     }} \#_{aB}(g^\tog) - \sum_{\substack{a,b \in \AA \\ B..A \textup{ in } u     }} \#_{Ba}(g^\tog) \\
 &=\sum_{\substack{a,b \in \AA \\ b..a \textup{ in } u     }} \#_{Ab}(g     ) - \sum_{\substack{a,b \in \AA \\ B..A \textup{ in } u     }} \#_{bA}(g)      \\
 &=- \alpha_u(g),
\end{align*}
where the last equality can be obtained by reindexing (exchange the roles of $a$ and $b$).

Similarly,
\begin{align*}
 \beta_{u^{-1}}(g^\tog) &=
   \sum_{\substack{a,b \in \AA \\ A..b \textup{ in } u^{-1}}} \#_{ab}(g^\tog) - \sum_{\substack{a,b \in \AA \\ A..b \textup{ in } u^{-1}}} \#_{BA}(g^\tog) \\
 &=\sum_{\substack{a,b \in \AA \\ B..a \textup{ in } u     }} \#_{ab}(g^\tog) - \sum_{\substack{a,b \in \AA \\ B..a \textup{ in } u     }} \#_{BA}(g^\tog) \\
 &=\sum_{\substack{a,b \in \AA \\ B..a \textup{ in } u     }} \#_{AB}(g     ) - \sum_{\substack{a,b \in \AA \\ B..a \textup{ in } u     }} \#_{ba}(g)      \\
 &=- \beta_u(g).
\end{align*}

Since the last letters in $g$ and $g^\tog$ are $a$ and $A$, or $A$ and $a$, respectively, for some $a \in \AA$, $\omega(g)+\omega(g^\tog)=0$.

(b) Follows directly from (a).

(c) We have $\tau_u(g) = -\tau_{u^{-1}}(g^\tog) = \tau_{u^{-1}}((g^\tog)^{-1}) = \tau_{u^{-1}}(g^\rev)$, where the first equality comes from part (b) and the second from Lemma~\ref{l:gginv}(c).
\end{proof}

\begin{theorem}\label{t:inverse-reverse}
For every oriented word $u$ of rank $k$, $k \geq 2$,
\[
 P_{u^{-1}} = (P_u)^\rev.
\]
\end{theorem}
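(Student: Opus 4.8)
The plan is to prove the set equality $P_{u^{-1}} = (P_u)^\rev$ by a direct double containment, relying almost entirely on Lemma~\ref{l:inverse-reverse}(c), which already packages the essential computational content. Recall that $P_u = \{g \mid \tau_u(g) > 0\}$ by Definition~\ref{d:order}, and that $(P_u)^\rev = \{g^\rev \mid g \in P_u\}$ is the image of $P_u$ under the word-reversal involution. First I would observe that since reversal is an involution on the set of reduced words of rank $k$ (reversal preserves reducedness because $x_1 x_2 \cdots x_n$ reduced means $x_{i+1} \neq x_i^{-1}$, and this condition is symmetric, so $x_n \cdots x_1$ is reduced as well), it suffices to show the equivalence $g \in P_{u^{-1}} \iff g^\rev \in P_u$ for an arbitrary reduced word $g$.

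The core step is then immediate: by Lemma~\ref{l:inverse-reverse}(c) applied with the roles set so that $\tau_u(g^\rev) = \tau_{u^{-1}}((g^\rev)^\rev) = \tau_{u^{-1}}(g)$, using that reversal is an involution. Hence
\[
 \tau_{u^{-1}}(g) = \tau_u(g^\rev),
\]
so $\tau_{u^{-1}}(g) > 0$ if and only if $\tau_u(g^\rev) > 0$, which is precisely the statement $g \in P_{u^{-1}} \iff g^\rev \in P_u$. This shows $g \in P_{u^{-1}}$ iff $g^\rev \in P_u$, i.e. iff $g = (g^\rev)^\rev \in (P_u)^\rev$, giving both containments at once.

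The only point requiring minor care is the bookkeeping around how Lemma~\ref{l:inverse-reverse}(c) is invoked: the lemma reads $\tau_u(g) = \tau_{u^{-1}}(g^\rev)$ for every reduced $g$, and one must substitute correctly to land on $\tau_{u^{-1}}(g) = \tau_u(g^\rev)$. I would either apply the lemma with $u$ replaced by $u^{-1}$ (noting $(u^{-1})^{-1} = u$), which directly yields $\tau_{u^{-1}}(g) = \tau_u(g^\rev)$, or equivalently apply it to the word $g^\rev$ in place of $g$ and use involutivity of reversal. There is essentially no obstacle here; the substance of the argument lives entirely in Lemma~\ref{l:inverse-reverse}(c), and the present theorem is the clean set-theoretic repackaging of that identity. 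The main thing to state explicitly, for rigor, is that reversal maps reduced words bijectively to reduced words, so that passing between the pointwise weight identity and the set equality is legitimate.
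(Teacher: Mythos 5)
Your proof is correct and takes exactly the same route as the paper, which simply cites Lemma~\ref{l:inverse-reverse}(c); you have merely spelled out the substitution (replacing $u$ by $u^{-1}$ and using that reversal is an involution preserving reducedness) that the paper leaves implicit.
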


\begin{proof}
Directly from the last part of the last lemma.
\end{proof}

\begin{proposition}\label{p:reversible}
The positive cone of the order induced by an oriented word $u$ is closed under word reversal if and only if $u = u^{-1}$.
\end{proposition}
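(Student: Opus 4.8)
The plan is to prove both implications through the positive-cone identity $P_{u^{-1}} = (P_u)^\rev$ of Theorem~\ref{t:inverse-reverse}, which turns the statement into a question about when $P_u = P_{u^{-1}}$. For the ``if'' direction, if $u = u^{-1}$ then trivially $P_{u^{-1}} = P_u$, so Theorem~\ref{t:inverse-reverse} gives $(P_u)^\rev = P_u$ and the cone is reversal-closed. The content lies in the ``only if'' direction: assuming $P_u$ is reversal-closed, Theorem~\ref{t:inverse-reverse} yields $P_u = P_{u^{-1}}$, and since a positive cone determines its order (via $g <_u h \iff g^{-1}h \in P_u$), this says that $\leq_u$ and $\leq_{u^{-1}}$ are the \emph{same} order on $F_k$. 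I would then reconstruct the letter arrangement of an oriented word from its induced order, so that $\leq_u = \leq_{u^{-1}}$ forces $u = u^{-1}$.

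The reconstruction proceeds pair by pair over the $2k$ letters of $\AA^\pm$. For two same-case letters the relative order in $u$ is read off directly from $\leq_u$ by Lemma~\ref{l:generator-order}, so $\leq_u = \leq_{u^{-1}}$ forces $a_i..a_j$ (resp.\ $A_i..A_j$) in $u$ if and only if the same holds in $u^{-1}$; in particular the positive and negative subsequences of $u$ and $u^{-1}$ coincide. For a same-index pair $a_t, A_t$ I would observe that their relative order agrees in $u$ and $u^{-1}$ automatically, with no appeal to the order at all: since $u^{-1}$ reads off $u$ by position-reversal and inversion, occurrences of $a_t$ and $A_t$ in $u$ at positions $i < i'$ become occurrences of $A_t$ and $a_t$ in $u^{-1}$ at positions $2k+1-i' < 2k+1-i$, so $a_t..A_t$ in $u$ if and only if $a_t..A_t$ in $u^{-1}$.

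The main obstacle is the mixed pair $a_s, A_t$ with $s \neq t$, whose relative position cannot be detected by any two-letter word (for such words the last-letter weight $\omega$ always masks the relevant weight contribution, as in Lemma~\ref{l:generator-order}), so I would use a length-three test word. The contribution $\wtc_u(a_t a_s)$ equals $1$ or $0$ according to whether $A_t..a_s$ or $a_s..A_t$ in $u$, so I would evaluate $\tau_u$ on $A_m a_t a_s$, where $m$ is chosen with $a_t..a_m$ in $u$: then $\wtc_u(A_m a_t) = -1$ cancels the $+1/2$ from $\omega$, giving $\tau_u(A_m a_t a_s) = \wtc_u(a_t a_s) - 1/2 = \pm 1/2$, whose sign records the position of $A_t$ relative to $a_s$. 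Crucially, since the same-case order is already known to agree, the auxiliary term $\wtc_{u^{-1}}(A_m a_t)$ also equals $-1$, so $\leq_u = \leq_{u^{-1}}$ forces $\wtc_u(a_t a_s) = \wtc_{u^{-1}}(a_t a_s)$, i.e.\ the mixed pair agrees. A symmetric word $a_m A_s A_t$ (with $A_m..A_s$ in $u$) handles the case in which no suitable $m$ exists because $a_t$ is the last positive letter. The only way both constructions could fail at once is if $a_t$ is the last positive letter and $A_s$ is the first negative letter; but the coincidence of the positive subsequence of $u$ with that of $u^{-1}$ (the latter being the reverse of the negative subsequence of $u$) forces the last positive letter and the first negative letter of $u$ to share an index, i.e.\ $s = t$, contradicting $s \neq t$.

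Having shown that every pair of letters occurs in the same relative order in $u$ and $u^{-1}$, I conclude $u = u^{-1}$, completing the ``only if'' direction. The count $2^k\cdot k!$ quoted in the surrounding text is then a separate elementary enumeration of the self-inverse oriented words (choose, for each generator, which of $\{a_i,A_i\}$ sits in the left half, and in what order the left half appears).
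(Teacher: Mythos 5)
Your proof is correct, but it takes a genuinely different route from the paper's in the forward direction (the backward direction --- $u = u^{-1}$ gives $(P_u)^\rev = P_{u^{-1}} = P_u$ --- is identical in both). The paper never passes to the second order $\leq_{u^{-1}}$: it tests reversal-closure directly inside the single order $\leq_u$, exhibiting for each violation of $u = u^{-1}$ a short word that is positive while its reversal is negative --- $\tau_u(aB) = 1/2$ against $\tau_u(Ba) = -1/2$ when $a..b$ and $A..B$ both hold in $u$, forcing $a..b \iff B..A$, and $\tau_u(abAB) > 0$ against $\tau_u(BAba) < 0$ when $a..B$ and $A..b$ both hold, forcing $a..B \iff b..A$ --- and these two pair conditions amount precisely to $u = u^{-1}$. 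You instead use Theorem~\ref{t:inverse-reverse} to convert reversal-closure into $P_u = P_{u^{-1}}$, and then prove that the positive cone determines the defining word; that is exactly the special case $u' = u^{-1}$ of Theorem~\ref{t:different}, which the paper proves later by an independent argument (so you could have cited it, with no circularity, and finished in two lines). Your hand-made reconstruction of that special case is correct in every detail: same-case pairs via Lemma~\ref{l:generator-order}, the automatic agreement for the pairs $\{a_t, A_t\}$, the length-three test words $A_m a_t a_s$ and $a_m A_s A_t$ whose weights $\pm 1/2$ detect the mixed pairs, and the degenerate case, where unavailability of both test words would force $s = t$ because agreement of the positive (and negative) subsequences of $u$ and $u^{-1}$ makes the last positive letter and the first negative letter of $u$ share an index. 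The trade-off: the paper's direct test is shorter and needs no auxiliary letter or case analysis; your route is more conceptual, isolating the real content (injectivity of $u \mapsto P_u$ on the pair $\{u, u^{-1}\}$) and reusing Theorem~\ref{t:inverse-reverse}, at the cost of extra bookkeeping.
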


\begin{proof}
($\Longrightarrow$) Let $P_u$ be closed under word reversal.

Let $a, b \in \AA$, with $a \neq b$. If both
\[
 a..b \qqand A..B,
\]
then $\tau_u(aB)=1/2$ and $\tau_u(Ba)=-1/2$, implying
\[
 Ba <_u e <_u aB,
\]
which contradicts the assumption that the positive cone is reversible. Therefore,
\begin{equation}\label{e:reversal1}
 a..b \iff B..A.
\end{equation}
Similarly, if $a,b\in\AA$ and both
\[
 a..B \qqand A..b,
\]
then $\tau_u(abAB) = \wtc_u(ab)+\wtc_u(bA)+\wtc_u(AB)-1/2 = 1+\wtc_u(bA)+0-1/2>0$ and $\tau_u(BAba) = \wtc_u(BA)+\wtc_u(Ab)+\wtc_u(ba) +1/2 = -1+\wtc_u(Ab)+0+1/2<0$, implying
\[
 BAba <_u e <_u abAB,
\]
which contradicts the assumption that the positive cone is reversible. Therefore,
\begin{equation}\label{e:reversal2}
 a..B \iff b..A.
\end{equation}
Since~\eqref{e:reversal1} holds for every letter $b$, with $b \neq a$, and~\eqref{e:reversal2} holds for every letter $b$, the set of letters appearing to the right of $a$ is the inverse of the set of letters that appears to the left of $A$, and since this holds for every letter $a$, the word $u$ has the property $u=u^{-1}$.

($\Longleftarrow$) Let $u$ be an oriented word for which $u=u^{-1}$. By Lemma~\ref{l:inverse-reverse}, $(P_u)^\rev= P_{u^{-1}} = P_u$.
\end{proof}

The following is direct corollary of Proposition~\ref{p:lexicographic} and Proposition~\ref{p:reversible}.

\begin{corollary}
The only oriented word of rank $k$ that induces an order on $F_k$ that is an extension of the lexicographic order on the monoid $\AA^*$ based on $a_1<a_2<\dots<a_k$ and for which the positive cone is closed under word reversal is the word
\[
 u = a_1a_2\dots a_k A_k \dots A_2 A_1.
\]
\end{corollary}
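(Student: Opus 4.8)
The plan is simply to intersect the two characterizations already in hand. By Proposition~\ref{p:lexicographic}, the order $\leq_u$ is an extension of the lexicographic order based on $a_1 < a_2 < \dots < a_k$ precisely when $u$ takes the form
\[
 u = a_1 a_2 \dots a_k A_{j_1} A_{j_2} \dots A_{j_k}
\]
for some permutation $j_1, \dots, j_k$ of $1, 2, \dots, k$; and by Proposition~\ref{p:reversible}, the positive cone $P_u$ is closed under word reversal precisely when $u = u^{-1}$. Thus the task reduces to identifying which words of the displayed lexicographic form additionally satisfy the self-inverse condition $u = u^{-1}$.

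To carry this out, I would compute $u^{-1}$ explicitly for a word $u$ of the lexicographic form. Recalling that inversion is reversal followed by case switch (the product of any two of the three involutions is the third), reversing the order of the letters and switching the case of each turns $u = a_1 \dots a_k A_{j_1} \dots A_{j_k}$ into
\[
 u^{-1} = a_{j_k} a_{j_{k-1}} \dots a_{j_1} A_k A_{k-1} \dots A_1.
\]
Comparing the positive (lowercase) prefixes of $u$ and of $u^{-1}$ then forces $a_1 a_2 \dots a_k = a_{j_k} a_{j_{k-1}} \dots a_{j_1}$, i.e. $j_{k+1-i} = i$ for every $i$, so that $j_i = k+1-i$. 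Substituting this permutation back produces the negative suffix $A_{j_1} \dots A_{j_k} = A_k A_{k-1} \dots A_1$, yielding
\[
 u = a_1 a_2 \dots a_k A_k \dots A_2 A_1,
\]
and one verifies directly that this single word does satisfy $u = u^{-1}$.

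Since membership in the lexicographic family already pins down the positive prefix of $u$ as $a_1 a_2 \dots a_k$, the equation $u = u^{-1}$ leaves no freedom whatsoever in the negative suffix, so the solution is unique. There is no genuine obstacle here beyond performing the reversal-and-case-switch computation correctly and observing that matching the two lowercase prefixes determines the permutation completely; the corollary then follows at once by combining Propositions~\ref{p:lexicographic} and~\ref{p:reversible}.
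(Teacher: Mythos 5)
Your proposal is correct and follows exactly the route the paper intends: the paper proves this corollary simply by citing Propositions~\ref{p:lexicographic} and~\ref{p:reversible} as a ``direct corollary,'' and your computation of $u^{-1}$ for words of the lexicographic form (correctly forcing $j_i = k+1-i$) just makes explicit the intersection argument the paper leaves to the reader.
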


Note that, for each $k \geq 2$, $u=a_1a_2\dots a_k A_k \dots A_2A_1$ is the word that induces the order on $F_k$ from~\cite{sunic:free-lex}.

%-------------------------------------------------------------

\section{The orders induced by oriented words are not two-sided}\label{s:no-bi}

\begin{proposition}\label{p:no-bi}
For every oriented word $u$ of rank $k$, $k \geq 2$, the left order $\leq_u$ is not a right order on $F_k$.
\end{proposition}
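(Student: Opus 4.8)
I want to show that for every oriented word $u$ of rank $k\geq 2$, the left order $\leq_u$ fails to be right-compatible. It suffices to exhibit a single right-compatibility violation, i.e., group elements $g,h,f$ with $g<_u h$ but $gf \not<_u hf$, or equivalently (using positive cones) a positive element $p\in P_u$ together with an element $f$ such that the conjugate $f^{-1}pf$ is \emph{not} in $P_u$. Since right-compatibility of a left order is equivalent to the positive cone being closed under conjugation (a left order is a bi-order iff $P_u$ is normal), the whole task reduces to producing one positive element whose conjugate is negative.

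**The approach.** The plan is to work at the smallest possible scale, using the generators themselves and the weight contributions recorded in Table~\ref{tbl:wtc}. Every generator $a\in\AA$ is positive (lies in $P_u$) by convention, since $\tau_u(a)=\omega(a)=1/2>0$. So I would fix a generator $a$ and conjugate it by some carefully chosen element $f$, then compute $\tau_u(f^{-1}af)$ directly from the weight-contribution formula
\[
 \tau_u(x_1\cdots x_n)=\sum_{i=1}^{n-1}\wtc_u(x_ix_{i+1})+\omega(x_1\cdots x_n),
\]
and show it is negative for an appropriate choice of $f$ and $a$ depending on the structure of $u$. The natural candidates are short conjugates such as $b a b^{-1}$, $A b a B$-type words, or conjugates by a single generator, where only two or three weight contributions enter. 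The key will be to pick the generator being conjugated and the conjugating letter so that all the relevant $\wtc_u$ values point the wrong way; because each $\wtc_u$ is $0$, $\pm 1$ and $\omega=\pm 1/2$, there is very little slack, and I expect that for \emph{every} $u$ one can find two generators whose relative positions in $u$ force a negative conjugate.

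**The main obstacle and how I'd handle it.** The difficulty is that the argument must be \emph{uniform in $u$}: a single explicit conjugate will not be negative for all $(2k)!$ words, so I expect to split into a small number of cases according to the relative placement of a chosen pair of letters in $u$ (for instance, whether $A..B$ or $B..A$, and whether $a..b$ or $b..a$). Concretely, given two distinct positive letters $a,b$, I would examine conjugates like $bab^{-1}=baB$ and $b^{-1}ab=Bab$: using Table~\ref{tbl:wtc}, $\tau_u(baB)=\wtc_u(ba)+\wtc_u(aB)-\tfrac12$ and $\tau_u(Bab)=\wtc_u(Ba)+\wtc_u(ab)+\tfrac12$, and these evaluate to sums of entries from the table that are pinned down by the positions of $A,B,a,b$ in $u$. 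The crux is a pigeonhole-style observation: the four ordering constraints among the two lowercase and two uppercase letters cannot all be simultaneously ``consistent'' with right-invariance, so at least one of these short conjugates (or their inverses) must leave the positive cone. I expect the cleanest route is to show that right-invariance would force $\tau_u(g)=\tau_u(f^{-1}gf)$ for all $g,f$, and then derive a contradiction from two incompatible conjugation identities at the level of single generators — the hard part being to verify that no arrangement of $u$ escapes all the test conjugates, which I would settle by a finite case analysis on the mutual order of $a,b,A,B$ in $u$.
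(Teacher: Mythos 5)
Your overall strategy coincides with the paper's: reduce right-invariance to conjugation-invariance of the positive cone $P_u$, evaluate $\tau_u$ on short conjugates via the weight contributions in Table~\ref{tbl:wtc}, and run a finite case analysis over the relative positions of $a,b,A,B$ in $u$. The ingredients you name ($baB$, $Bab$, words of type $AbaB$) are exactly the ones the paper uses. The genuine gap is that the case analysis, which is the entire mathematical content here, is never carried out --- and the specific pigeonhole you predict would close it is false. If the four letters occur in $u$ in the relative order $B..A..a..b$ (or $B..a..A..b$), then every conjugate of a generator by a generator is consistent with conjugation-invariance: $\tau_u(baB)=\wtc_u(ba)+\wtc_u(aB)-\tfrac{1}{2}=1+0-\tfrac{1}{2}>0$, $\tau_u(Bab)=\wtc_u(Ba)+\wtc_u(ab)+\tfrac{1}{2}=-1+1+\tfrac{1}{2}>0$, $\tau_u(abA)=1+1-\tfrac{1}{2}>0$, $\tau_u(Aba)=0+1+\tfrac{1}{2}>0$, and likewise $Ab$, $bA$ are both positive while $aB$, $Ba$ are both negative. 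So no contradiction can be extracted from these test elements alone, and a case analysis confined to them fails in precisely this configuration.

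What the paper does to get past this point: assuming $a..b$ in $u$, it first uses the conjugate pair ($Ab$, $bA$) and the conjugates $Bab$, $baB$ of the generator $a$ to force exactly the residual configuration above ($B..A..b$ and $B..a..b$), and only then exhibits a conjugate pair of length-four words with weights of opposite sign: $\tau_u(AbaB)=0+1+0-\tfrac{1}{2}>0$ while $\tau_u(baBA)=1+0-1-\tfrac{1}{2}<0$, where $baBA=a(AbaB)a^{-1}$. Note that $AbaB$ is cyclically reduced, so this final contradiction is not of the form ``a conjugate of a generator leaves the cone''; it requires comparing a word with its cyclic permutation, something your candidate list contains but your argument never deploys. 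Separately, your proposed ``cleanest route'' --- that right-invariance would force $\tau_u(g)=\tau_u(f^{-1}gf)$ for all $f,g$ --- is unavailable: invariance of a positive cone constrains only the signs of weights, not their values (in the configuration above, $b$ and its conjugate $abA=aba^{-1}$ have weights $\tfrac{1}{2}$ and $\tfrac{3}{2}$).
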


\begin{proof}
By way of contradiction, assume that $\leq_u$ is a bi-order. Then the positive cone $P_u$ is invariant under conjugation in $F_k$.

Let $a,b\in \AA$, with $a..b$ in $u$. We have
\[
 a..b \textup{ in }u \implies Ab \in P_u  \implies bA \in P_u \implies B..A \textup{ in } u,
\]
where we used the invariance under conjugation in the second implication. Since $Bab \in P_u$ and $\tau_u(Bab) = -1+\wtc_u(ab)+\frac{1}{2}$, we must have $\wtc_u(ab)=1$, which means that $A..b$ in $u$. Since $baB \in P_u$ and $\tau_u(baB) = \wtc_u(ba)+0-\frac{1}{2}$, we must have $\wtc_u(ba)=1$, which means that $B..a$ in $u$. Therefore,
\begin{align*}
 &B..A..b \textup{ in } u, \\
 &B..a..b \textup{ in } u,
\end{align*}
which implies that
\[
 \tau_u(AbaB) = 0+1+0 - \frac{1}{2}>0 \qqand \tau_u(baBA) = 1+0-1 - \frac{1}{2}<0,
\]
contradicting the invariance of $P_u$ under conjugation.
\end{proof}

%-------------------------------------------------------------

\section{Differences between the defined orders}\label{s:different}

We prove that, for every $k$, the $(2k)!$ orders induced by the oriented words on $F_k$ are distinct. The proof is based on finding, for each distinct pair of oriented words $u$ and $u'$ of rank $k$, a group word in $P_u$ that is not in $P_{u'}$, or the other way around (words of length at most four suffice).

\begin{theorem}\label{t:different}
For every $k$, $k \geq2$, and every pair $u$ and $u'$ of distinct oriented words of rank $k$, the orders $\leq_u$ and $\leq_{u'}$, induced by $u$ and $u'$ on $F_k$, are distinct.
\end{theorem}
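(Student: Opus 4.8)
The plan is to prove the theorem in its equivalent form: the assignment $u \mapsto {\leq_u}$ is injective, i.e., $u$ can be reconstructed from its positive cone $P_u$. Concretely this is exactly the scheme announced before the statement: for each pair of distinct oriented words $u \neq u'$ I will exhibit a group word lying in precisely one of $P_u$, $P_{u'}$. Since an oriented word is nothing but a linear ordering of the $2k$ letters of $\AA_k^\pm$, the inequality $u \neq u'$ means there are distinct letters $x,y \in \AA_k^\pm$ with $x..y$ in $u$ and $y..x$ in $u'$. I will produce the separating word by a case analysis on the types (lowercase/uppercase) of $x$ and $y$, reading each witness off the weight-contribution dictionary in Table~\ref{tbl:wtc}. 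The underlying principle is that every $\wtc_u$-value of a $2$-letter word depends only on the relative position in $u$ of the two letters involved, so the weight $\tau_u$ of a fixed short word is determined by finitely many such relative positions.

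First I dispose of the same-case pairs. If $x=a$ and $y=b$ are both lowercase, then by Lemma~\ref{l:generator-order} we have $a <_u b$ but $b <_{u'} a$, so the reduced word $Ab=a^{-1}b$ lies in $P_u$ but not in $P_{u'}$; symmetrically, if $x=A$ and $y=B$ are both uppercase the word $aB$ separates the two cones. Hence if $u$ and $u'$ differ in the relative order of any two letters of the same case we are done, and from now on we may assume that $u$ and $u'$ induce the \emph{same} linear order on the lowercase letters and the \emph{same} linear order on the uppercase letters, differing only in the way these two chains are interleaved. In particular any separating word may be built using anchor letters whose $\wtc$-contributions are already pinned down by these common suborders, and are therefore identical for $u$ and $u'$.

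It remains to treat a differing mixed pair: a lowercase letter $a$ and an uppercase letter $B$ (possibly an inverse pair, $B=A$, which the same-case step never reaches) with $a..B$ in $u$ and $B..a$ in $u'$. By Table~\ref{tbl:wtc} the relative position of $a$ and $B$ is recorded by the discriminating contribution $\wtc_u(ba)$, where $b=B^{-1}$ and $b=a$ is allowed, equal to $1$ exactly when $B..a$ and to $0$ otherwise; the companion contribution $\wtc_u(AB)$ does the same job with values $-1$ and $0$. The idea is to put this discriminating block at the end of a short word so that $\tau_u$ equals $\wtc_u(ba)-\tfrac12 \in \{-\tfrac12,\tfrac12\}$, whose sign then detects $B..a$. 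When $b$ is not the last lowercase letter, pick a lowercase $d$ with $b..d$, set $D=d^{-1}$, and use $g=Dba$: then $\wtc_u(Db)=-1$ and, the word ending in a lowercase letter, $\tau_u(Dba)=\wtc_u(ba)-\tfrac12$, while the shared suborder gives the same anchor value for $u'$, so $Dba$ separates the cones. When instead $a^{-1}$ is not the first uppercase letter, pick an uppercase $C$ with $C..A$ and use the symmetric witness $g=cAB$ with $c=C^{-1}$, for which $\tau_u(cAB)=\wtc_u(AB)+\tfrac12$.

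The main obstacle is the remaining corner, where both tests degenerate simultaneously: $b$ is the last lowercase letter \emph{and} $A=a^{-1}$ is the first uppercase letter (this is exactly where no single anchor is available on either side, and it contains the inverse-pair case $a=b$). Here a length-four witness is required, and such words can be written down explicitly; the finite verification is routine once the shared suborders are used to force the intermediate contributions. For instance, for $k=2$ the extreme words $u=abBA$ and $u'=aBbA$, which agree on both suborders, are separated by $g=Babb$: using Table~\ref{tbl:wtc} one finds $\tau_u(Babb)=-\tfrac12$ and $\tau_{u'}(Babb)=\tfrac12$, the anchor $Ba$ contributing $-1$ in both words while the discriminating block $bb$ contributes $0$ under $u$ and $1$ under $u'$. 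Carrying out this last case in general completes the reconstruction of $u$ from $\leq_u$, and hence shows that the $(2k)!$ orders are pairwise distinct.
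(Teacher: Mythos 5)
Your same-case step and your two tests are correct: the witnesses $Ab$, $aB$ (via Lemma~\ref{l:generator-order}) and $Dba$, $cAB$ do separate $P_u$ from $P_{u'}$, because the anchor contributions $\wtc(Db)$ and $\wtc(cA)$ are governed by the shared same-case suborders. The genuine gap is the corner case, which you never actually carry out, and the reason you give for why it should be routine---that ``the shared suborders are used to force the intermediate contributions''---is false. Your guiding principle is also stated misleadingly: by Table~\ref{tbl:wtc}, the contribution $\wtc_u(xy)$ is determined by the relative position of $x^{-1}$ and $y$, not of $x$ and $y$. Consequently case-alternating blocks ($aB$, $Ba$, $Ab$, $bA$) are governed by \emph{same-case} pairs, which you have pinned down, but case-preserving blocks ($ab$, $ba$, $aa$, $bb$, $AB$, $BA$) are governed by \emph{mixed} pairs, about which the shared suborders say nothing. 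This matters precisely in the corner: there every anchor adjacent to a discriminating block degenerates ($\wtc(Xb)=0$ for all uppercase $X\neq B$ because $b$ is the last lowercase letter, and $\wtc(xA)=0$ for all lowercase $x\neq a$ because $A$ is the first uppercase letter), and a short check for $k=2$ shows that \emph{no} word built only from same-case-determined anchors and the discriminating blocks $ba$, $AB$ has weights of opposite signs under $u$ and $u'$. Even your own witness $Babb$ leans on the middle contribution $\wtc(ab)$, which is governed by the mixed pair $(A,b)$; it happens to take the same value for $abBA$ and $aBbA$, but nothing you have proved forces such agreement in general.

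The missing idea is the structural point the paper's proof is organized around. Since the corner forces $B^{-1}$ to be the last lowercase letter and $a^{-1}$ the first uppercase letter, at most one pair can ever be in the corner; hence in the corner case you may assume that \emph{every other pair}---in particular every pair consisting of a letter and its own inverse---has the same relative position in $u$ and $u'$, because any other disagreeing pair is non-corner and is dispatched by your earlier steps. The paper achieves the same effect by ordering its cases: it first treats a disagreement $A..a$ versus $a..A$ (witnesses $Baa$, $aaB$, $Abaa$), and only then a mixed disagreement $A..b$ versus $b..A$ under the standing assumption that all letter-inverse pairs agree; that assumption is exactly what licenses its step ``since $B..b$ and $A..a$ in $u$, we also must have $B..b$ and $A..a$ in $u'$,'' which pins down $B..A..b..a$ in one word against $B..b..A..a$ in the other and makes the length-four witnesses $Abaa$ and $ABab$ verifiable. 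Once you add this reduction (uniqueness of the corner pair, or the paper's case ordering), your corner case closes and the two proofs essentially coincide; as written, however, the proof is incomplete at its hardest point.
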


\begin{proof}
Let $u$ and $u'$ be two distinct oriented words of rank $k$.

By Lemma~\ref{l:generator-order}, if the relative positions of the lowercase letters or the relative positions of the uppercase letters differ in $u$ and $u'$, the induced orders $\leq_u$ and $\leq_{u'}$ are different (the positive cones $P_u$ and $P_{u'}$ already differ on words of length two).

Assume that the relative positions of the lowercase letters and the relative positions of the uppercase letters are the same in $u$ and $u'$.

Assume further, that there exists a letter $a \in \AA$ such that
\begin{alignat*}{2}
 &A..a \qquad &&\textup{ in } u \\
 &a..A        &&\textup{ in } u'.
\end{alignat*}
Let $b \in \AA$, $b \neq a$. If $a..b$ in $u$ and $u'$ then
\[
 \tau_u(Baa) = \frac{1}{2} \qqand \tau_{u'}(Baa) = -\frac{1}{2},
\]
and if $B..A$ in $u$ and $u'$ then
\[
 \tau_u(aaB) = \frac{1}{2} \qqand \tau_{u'}(aaB) = -\frac{1}{2}.
\]
On the other hand, if both $b..a$ and $A..B$ in $u$ and $u'$, then
\begin{alignat*}{2}
    &A..a           &&\textup{ in } u \\
 b..&a..A..B \qquad &&\textup{ in } u',
\end{alignat*}
and this yields
\[
 \tau_u(Abaa) \geq -1+0+1+\frac{1}{2} = \frac{1}{2} \qqand
 \tau_{u'}(Abaa) = -1+0+0+\frac{1}{2} = -\frac{1}{2}.
\]

It remains to consider the case when, for all $a \in \AA$, the relative positions of $a$ and $A$ in $u$ and $u'$ agree, but there is a pair of distinct letters $a,b \in \AA$ such that
\begin{alignat*}{2}
 &A..b \qquad &&\textup{ in } u \\
 &b..A        &&\textup{ in } u'.
\end{alignat*}
If $a..b$ in $u$ and $u'$ then
\[
 \tau_u(Bab) = \frac{1}{2} \qqand \tau_{u'}(Bab) = -\frac{1}{2},
\]
and if $A..B$ in $u$ and $u'$ then
\[
 \tau_u(abA) = \frac{1}{2} \qqand \tau_{u'}(abA) = -\frac{1}{2}.
\]
On the other hand, if both $b..a$ and $B..A$ in $u$ and $u'$, then
\[
 B..A..b..a \qquad \textup{ in } u,
\]
and, since $B..b$ and $A..a$ in $u$, we also must have $B..b$ and $A..a$ in $u'$. Thus
\[
 B..b..A..a \qquad \textup{ in } u',
\]
and this yields
\[
 \tau_u(ABab)    = -1+0+1+\frac{1}{2} = \frac{1}{2} \qqand
 \tau_{u'}(ABab) = -1+0+0+\frac{1}{2} = -\frac{1}{2}.
\]

Therefore, in each case, there is a reduced word $g$ of rank $k$ for which the signs of the weights $\tau_u(g)$ and $\tau_{u'}(g)$ differ, which implies that the positive cones $P_u$ and $P_{u'}$ differ as well.
\end{proof}

\begin{remark}
Note that it is much easier to show that the orders associated to $u$ and $u'$ are different when the relative positions of the positive letters or the negative letters in $u$ and $u'$ are different. In other words, the slightly more complicated cases in the proof above occur when the case transition weights $\alpha_u$ and $\alpha_{u'}$ coincide and one has to rely only on the letter transition weights $\beta_u$ and $\beta_{u'}$ to make a distinction between $\leq_u$ and $\leq_{u'}$.
\end{remark}

\begin{remark}
Since the letter transition weights are sums of Brooks quasi-characters, it is clear that the difference $\beta_u-\beta_{u'}$ is always a sum of Brooks quasi-characters. It turns out that the difference of the case transition weights $\alpha_u - \alpha_{u'}$ is always a sum of Brooks quasi-characters as well.
\end{remark}

\begin{proposition}
For any two oriented words $u$ and $u'$ of rank $k$, $k \geq 2$, the difference between the weight function $\tau_u$ and $\tau_{u'}$ is a sum of Brooks quasi-characters. More precisely,
\[
 \tau_u - \tau_{u'} = \sum_{\substack{x..y \textup{ in }u \\ y..x \textup{ in }u'}} \phi_{x^{-1}y}
\]
\end{proposition}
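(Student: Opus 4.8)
The plan is to treat both sides as functions on $F_k$ and compare them coefficient-by-coefficient as linear combinations of the length-two subword counts $g \mapsto \#_{pq}(g)$. Since all the quantities involved ($\tau_u$, $\tau_{u'}$, and the Brooks characters $\phi_w$) are defined through reduced forms, it suffices to check the identity on an arbitrary reduced word $g = x_1x_2\cdots x_n$. First I would invoke the weight-contribution expression $\tau_u(g) = \sum_{i=1}^{n-1}\wtc_u(x_ix_{i+1}) + \omega(g)$. Because the last-letter weight $\omega$ does not depend on the defining word, it cancels in the difference, and regrouping the consecutive pairs $x_ix_{i+1}$ by their type gives
\[
 \tau_u(g) - \tau_{u'}(g) = \sum_{i=1}^{n-1}\bigl(\wtc_u(x_ix_{i+1}) - \wtc_{u'}(x_ix_{i+1})\bigr) = \sum_{pq}\bigl(\wtc_u(pq) - \wtc_{u'}(pq)\bigr)\,\#_{pq}(g),
\]
where the last sum ranges over all reduced two-letter words $pq$ of rank $k$. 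Thus the left-hand side is expressed as a fixed linear combination of the functions $\#_{pq}$, and the goal becomes to show that the right-hand side, when expanded, carries the identical coefficient on each $\#_{pq}$.

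Next I would pin down the left-hand coefficient. The key observation is that $\wtc_u(pq)$ depends only on the relative position of the single letter $p^{-1}$ and the letter $q$ in $u$. Reading off the defining formula (equivalently, Table~\ref{tbl:wtc}): if $p$ is positive then $\wtc_u(pq)$ is $1$ when $p^{-1}..q$ in $u$ and $0$ otherwise, whereas if $p$ is negative then $\wtc_u(pq)$ is $0$ when $p^{-1}..q$ in $u$ and $-1$ otherwise. In both cases $\wtc_u(pq)$ equals the indicator of the event ``$p^{-1}..q$ in $u$'' plus a constant that depends only on the sign of $p$ (namely $0$ or $-1$). Since that constant is the same for $u$ and $u'$, it cancels and, writing $[\,\cdots\,]$ for the $\{0,1\}$-valued indicator,
\[
 \wtc_u(pq) - \wtc_{u'}(pq) = \bigl[\,p^{-1}..q \text{ in } u\,\bigr] - \bigl[\,p^{-1}..q \text{ in } u'\,\bigr].
\]
Here the hypothesis that $pq$ is reduced guarantees $q \neq p^{-1}$, so the two letters being compared are genuinely distinct and their relative order in an oriented word is well defined.

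Finally I would expand the right-hand side and match. Writing each summand as $\phi_{x^{-1}y}(g) = \#_{x^{-1}y}(g) - \#_{y^{-1}x}(g)$, I would compute, for a fixed reduced $pq$, the total coefficient of $\#_{pq}(g)$. The count $\#_{pq}$ arises with coefficient $+1$ from the unique index with $x^{-1}=p,\,y=q$ (that is $x=p^{-1},\,y=q$), which is included exactly when $p^{-1}..q$ in $u$ and $q..p^{-1}$ in $u'$; and it arises with coefficient $-1$ from the unique index with $y^{-1}=p,\,x=q$, included exactly when $q..p^{-1}$ in $u$ and $p^{-1}..q$ in $u'$. These two events are mutually exclusive, so, abbreviating $\mu = [\,p^{-1}..q \text{ in } u\,]$ and $\nu = [\,p^{-1}..q \text{ in } u'\,]$, the net coefficient is
\[
 \bigl[\,p^{-1}..q \text{ in } u\,\bigr]\bigl[\,q..p^{-1} \text{ in } u'\,\bigr] - \bigl[\,q..p^{-1} \text{ in } u\,\bigr]\bigl[\,p^{-1}..q \text{ in } u'\,\bigr] = \mu(1-\nu) - (1-\mu)\nu = \mu - \nu,
\]
which is exactly the left-hand coefficient found above. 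Hence the two expansions in terms of the $\#_{pq}$ coincide term by term, giving the asserted identity.

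The only genuine obstacle is the sign bookkeeping concentrated in the middle step: one must check both the positive-$p$ and negative-$p$ rows of the weight-contribution table to confirm that the sign-dependent additive constant is the same for $u$ and $u'$ and therefore drops out of the difference, and one must correctly account for the two distinct ways a given reduced $pq$ can occur inside a Brooks character $\phi_{x^{-1}y}$ (once as the subword $x^{-1}y$, once as the inverse subword $y^{-1}x$), noting that these contribute with opposite signs and cannot occur simultaneously. Once these routine case distinctions are dispatched, the coefficientwise agreement is immediate.
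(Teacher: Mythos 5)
Your proof is correct, and it takes a somewhat different route from the paper's. The paper argues componentwise: starting from the defining formulas~\eqref{e:alpha} and~\eqref{e:beta}, it computes $\alpha_u-\alpha_{u'}$ and $\beta_u-\beta_{u'}$ separately, cancels the terms whose position conditions hold in both $u$ and $u'$, and regroups the surviving counts (after a reindexing $a\leftrightarrow b$) into four families of Brooks summands, $\phi_{aB}$, $\phi_{Ab}$, $\phi_{ab}$, $\phi_{BA}$, which together exhaust the stated index set. You instead work from the weight-contribution expansion $\tau_u(g)=\sum_{i}\wtc_u(x_ix_{i+1})+\omega(g)$ and compress all of Table~\ref{tbl:wtc} into the single identity $\wtc_u(pq)=[\,p^{-1}..q\textup{ in }u\,]+c(p)$, where $c(p)\in\{0,-1\}$ depends only on the sign of $p$ and hence cancels in the difference; the proposition then reduces to matching the coefficient of each $\#_{pq}$ on the two sides, and the computation $\mu(1-\nu)-(1-\mu)\nu=\mu-\nu$ closes the argument. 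Your version buys uniformity: there is no split into case transitions and letter transitions, and it makes transparent why exactly the pairs whose relative order flips between $u$ and $u'$ index the sum. The paper's version buys slightly more information: it establishes along the way that $\alpha_u-\alpha_{u'}$ and $\beta_u-\beta_{u'}$ are each, separately, sums of Brooks quasi-characters, which is what the remark preceding the proposition asserts. Two details you rightly made explicit are genuinely needed for your argument to be airtight: reducedness of $pq$ guarantees $p^{-1}\neq q$, so the indicator $[\,p^{-1}..q\textup{ in }u\,]$ is well defined, and the two potential occurrences of $\#_{pq}$ inside the right-hand sum come from the distinct index pairs $(p^{-1},q)$ and $(q,p^{-1})$, so no single Brooks summand contributes twice.
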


\begin{proof}
We have
\[
 \sum_{\substack{a,b \in \AA \\ A..B \textup{ in } u}} \#_{aB}(g) - \sum_{\substack{a,b \in \AA \\ A..B \textup{ in } u'}} \#_{aB}(g) =
 \sum_{\substack{A..B \textup{ in } u\\ B..A \textup{ in } u'}} (\#_{aB}(g) - \#_{bA}(g)) = \sum_{\substack{A..B \textup{ in } u \\ B..A \textup{ in } u'}} \phi_{aB}(g),
\]
since the terms of the form $\#_{aB}(g)$ in the sums corresponding to $u$ and $u'$ cancel when the relative positions of $A$ and $B$ in $u$ and $u'$ agree. Similarly,
\[
 -\sum_{\substack{a,b \in \AA \\ a..b \textup{ in } u}} \#_{Ba}(g) + \sum_{\substack{a,b \in \AA \\ a..b \textup{ in } u'}} \#_{Ba}(g) =
 \sum_{\substack{a..b \textup{ in } u\\ b..a \textup{ in } u'}} (\#_{Ab}(g)-\#_{Ba}(g)) = \sum_{\substack{a..b \textup{ in } u \\ b..a \textup{ in } u'}} \phi_{Ab}(g).
\]
Therefore,
\[
 \alpha_u - \alpha_{u'} = \sum_{\substack{A..B \textup{ in } u \\ B..A \textup{ in } u'}} \phi_{aB} + \sum_{\substack{a..b \textup{ in } u \\ b..a \textup{ in } u'}} \phi_{Ab}.
\]

We also have
\[
 \beta_u - \beta_{u'} = \sum_{\substack{a,b \in \AA \\ A..b \textup{ in } u}} \phi_{ab} - \sum_{\substack{a,b \in \AA \\ A..b \textup{ in } u}} \phi_{ab} =
  \sum_{\substack{A..b \textup{ in } u \\ b..A \textup{ in } u'}} \phi_{ab} - \sum_{\substack{b..A \textup{ in } u \\ A..b \textup{ in } u'}} \phi_{ab},
\]
and, since $-\phi_{ab} = \phi_{BA}$,
\[
 \beta_u - \beta_{u'} =
   \sum_{\substack{A..b \textup{ in } u \\ b..A \textup{ in } u'}} \phi_{ab} + \sum_{\substack{b..A \textup{ in } u \\ A..b \textup{ in } u'}} \phi_{BA}. \qedhere
\]
\end{proof}

We provide relations between $\tau_u$ and $\tau_{u^\rev}$, $\tau_{u^\tog}$ and $\tau_{u^{-1}}$. These relations provide, at least indirectly, relations between the positive cones associated to $u$, $u^\rev$, $u^\tog$, and $u^{-1}$.

Note that the sum of Brooks quasi-morphisms $\sum_{a \in \AA} \phi_a$ is the \emph{total exponent} homomorphism $F_k \to \RR$ calculating the difference between the the number of positive and negative letters in its argument.

\begin{proposition}
Let $u$ be an oriented word of rank $k$, $k \geq 2$. Then

(a)
\[
 \tau_u + \tau_{u^\rev} = \sum_{a \in \AA} \phi_a,
\]

(b)
\[
 \tau_u(g) + \tau_{u^{-1}}(g^\tog) = 0,
\]

(c)
\[
 \tau_u(g) + \tau_{u^\tog}(g^\rev) = \sum_{a \in \AA} \phi_a(g).
\]
\end{proposition}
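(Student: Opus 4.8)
The three identities have quite different difficulty, so I would dispatch them in the order (b), (a), (c). Part (b) requires no new work: the asserted equality $\tau_u(g)+\tau_{u^{-1}}(g^\tog)=0$ is verbatim Lemma~\ref{l:inverse-reverse}(b), so I would simply invoke it. The genuine content is in part (a), and part (c) will then fall out as a formal consequence of (a) together with Lemma~\ref{l:inverse-reverse}(c).

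For part (a), the plan is a direct computation of $\tau_u+\tau_{u^\rev}$ resting on the single structural fact that reversing $u$ reverses all relative positions, i.e. $x..y$ in $u^\rev$ exactly when $y..x$ in $u$. First I would rewrite $\alpha_{u^\rev}$ and $\beta_{u^\rev}$ with this substitution and add them to $\alpha_u$ and $\beta_u$. The crucial bookkeeping point is that once the summation conditions ``$A..B$'' and ``$B..A$'' (respectively ``$A..b$'' and ``$b..A$'') are combined, each ordered pair of letters is counted exactly once, since for any two distinct letters exactly one of the two orderings occurs in $u$. After a reindexing that swaps the roles of $a$ and $b$ in the negative $\alpha$-term, this yields, as functions on $F_k$,
\[
 \alpha_u+\alpha_{u^\rev}=\sum_{a\neq b}\#_{aB}-\sum_{a\neq b}\#_{Ab}
 \qqand
 \beta_u+\beta_{u^\rev}=\sum_{a,b\in\AA}\#_{ab}-\sum_{a,b\in\AA}\#_{BA},
\]
the first of which is governed by Lemma~\ref{l:total-transition-count} and the second of which is the difference between the number of lowercase-to-lowercase and uppercase-to-uppercase transitions of $g$. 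Since $\omega$ does not depend on $u$, it contributes $2\omega(g)$.

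The remaining step is an elementary counting identity. Grouping the four summands above by the case of the \emph{first} letter of each two-letter transition, the combination $\alpha_u+\alpha_{u^\rev}+\beta_u+\beta_{u^\rev}$ equals the number of transitions beginning in a lowercase letter minus the number beginning in an uppercase letter. A transition begins at every letter of $g$ except the last, so this difference is $(L-\epsilon_m)-(U-\epsilon_M)$, where $L$ and $U$ are the numbers of lowercase and uppercase letters of $g$ and $\epsilon_m,\epsilon_M\in\{0,1\}$ record the case of the last letter. Since $2\omega(g)=\epsilon_m-\epsilon_M$, adding it cancels the boundary corrections and leaves exactly $L-U=\sum_{a\in\AA}\phi_a(g)$, which proves (a). I expect this bookkeeping to be the only real obstacle: one must keep straight that the $\alpha$-sums range over $a\neq b$ while the $\beta$-sums range over all pairs including $a=b$, and the reindexing must be carried out correctly. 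The counting identity itself is routine once set up.

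Finally, for part (c) I would apply Lemma~\ref{l:inverse-reverse}(c) to the oriented word $u^\rev$. Since reversal and case switch commute and compose to inversion, $(u^\rev)^{-1}=u^\tog$, so that lemma gives $\tau_{u^\rev}(g)=\tau_{u^\tog}(g^\rev)$. Substituting this into the identity of part (a) immediately yields
\[
 \tau_u(g)+\tau_{u^\tog}(g^\rev)=\tau_u(g)+\tau_{u^\rev}(g)=\sum_{a\in\AA}\phi_a(g),
\]
as required.
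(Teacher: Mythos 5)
Your proposal is correct and follows essentially the same route as the paper: part (b) by citing Lemma~\ref{l:inverse-reverse}(b), part (a) by rewriting $\alpha_{u^\rev},\beta_{u^\rev}$ via reversed relative positions, combining the sums into a count of transitions by the case of their first letter, and letting $2\omega$ absorb the last-letter correction, and part (c) by reducing to $\tau_{u^\tog}(g^\rev)=\tau_{u^\rev}(g)$ and invoking (a). The only cosmetic difference is that in (c) you invoke Lemma~\ref{l:inverse-reverse}(c) in one step, whereas the paper re-derives that identity from part (b) together with Lemma~\ref{l:gginv}(c), which is the same computation.
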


\begin{proof}
(a) For every reduced word $g$ of rank $k$,
\[
 \alpha_{u^\rev}(g) =
  \sum_{\substack{a,b \in \AA \\A..B \textup{ in } u^\rev}} \#{aB}(g) - \sum_{\substack{a,b \in \AA \\a..b \textup{ in } u^\rev}} \#{Ba}(g) =
  \sum_{\substack{a,b \in \AA \\B..A \textup{ in } u  }} \#{aB}(g) - \sum_{\substack{a,b \in \AA \\b..a \textup{ in } u     }} \#{Ba}(g).
\]
Since, for every pair of distinct letters $a,b \in \AA$, either $A..B$ or $B..A$ in $u$, but not both, and either $a..b$ or $b..a$ in $u$, but not both,
\[
 \alpha_u(g) + \alpha_{u^\rev}(g) = \sum_{\substack{a,b \in \AA \\ a \neq b}} \#_{aB}(g) - \sum_{\substack{a,b \in \AA \\ a \neq b}} \#_{Ba}(g).
\]

Similarly,
\[
 \beta_{u^\rev}(g) =
  \sum_{\substack{a,b \in \AA \\A..b \textup{ in } u^\rev}} \#{ab}(g) - \sum_{\substack{a,b \in \AA \\A..b \textup{ in } u^\rev}} \#{BA}(g) =
  \sum_{\substack{a,b \in \AA \\b..A \textup{ in } u  }} \#{aB}(g) - \sum_{\substack{a,b \in \AA \\b..A \textup{ in } u  }} \#{BA}(g)
\]
and
\[
 \beta_u(g) + \beta_{u^\rev}(g) = \sum_{a,b \in \AA} \#_{ab}(g) - \sum_{a,b \in \AA} \#_{BA}(g).
\]

Therefore
\[
 \alpha_u(g) + \beta_u(g) + \alpha_{u^\rev}(g) + \beta_{u^\rev}(g) =
 \sum_{\substack{a \in \AA,~x \in \AA^\pm \\ a \neq x^{-1}} } \#{ax}(g) - \sum_{\substack{b \in \AA,~y \in \AA^\pm \\ b \neq y^{-1}}} \#{By}(g).
\]
The first sum above counts each positive letter in $g$, except the last one, with plus sign, and the second one counts each negative letter, except the last one, with a minus sign. Since $2 \omega$ counts the last letter with appropriate sign, the conclusion follows.

(b) This is established in Lemma~\ref{l:inverse-reverse}(b).

(c) By part (b),
\[
 \tau_{u^\tog}(g^\rev) = - \tau_{(u^\tog)^{-1}}((g^\rev)^\tog) = - \tau_{u^\rev}(g^{-1}) = \tau_{u^\rev}(g),
\]
and the conclusion follows from part (a).
\end{proof}

%---------------------------------------------------------------------------

\def\cprime{$'$}

%\bibliographystyle{alpha}
%\bibliography{../smath}

\end{document}